\definecolor{aleacolor}{rgb}{0.16,0.59,0.78}
\renewcommand{\cite}{\citet}
\theoremstyle{plain}
\newtheorem{theorem}{Theorem}[section]                                          
\newtheorem{lemma}[theorem]{Lemma}
\theoremstyle{definition}
\newtheorem{definition}[theorem]{Definition}
\theoremstyle{remark}
\newtheorem{remark}[theorem]{Remark}
\makeatletter \@addtoreset{equation}{section} \makeatother
\renewcommand{\Pr}{\mathbb{P}}
\newcommand{\N}{\mathbb{N}}
\newcommand{\Z}{\mathbb{Z}}
\newcommand{\R}{\mathbb{R}}
\newcommand{\E}{\mathbb{E}}
\newcommand{\CB}{\mathcal{B}}
\newcommand{\CC}{\mathcal{C}}
\newcommand{\CD}{\mathcal{D}}
\newcommand{\CH}{\mathcal{H}}
\newcommand{\CL}{\mathcal{L}}
\newcommand{\CM}{\mathcal{M}}
\newcommand{\CP}{\mathcal{P}}
\newcommand{\CW}{\mathcal{W}}
\newcommand{\CX}{\mathcal{X}}
\newcommand{\CZ}{\mathcal{Z}}
\newcommand{\e}{\varepsilon}
\DeclareMathOperator{\Var}{Var}
\definecolor{cff0000}{RGB}{255,0,0}
\newcommand{\norm}[1]{\lVert#1\rVert} 
\newcommand{\Norm}[1]{\big\lVert#1\big\rVert}
\newcommand{\ind}[1]{\mathbbm{1}_{\{#1\}}} 
\newcommand{\indset}[1]{\mathbbm{1}_{#1}} 
\newcommand{\sgn}{\mathop{\mathrm{sgn}}}
\renewcommand{\l}{\left}
\renewcommand{\r}{\right}
\newcommand{\ra}{\rightarrow}
\newcommand{\lra}{\longrightarrow}
\title[Brownian web limit for coalescing random walks on an oriented percolation
cluster]{Coalescing directed random walks on the backbone of a
  $1+1$-dimensional oriented percolation cluster converge to the
  Brownian web} \date{\today}
\author{Matthias Birkner}
\author{Nina Gantert}
\author{Sebastian Steiber}
\address{Johannes Gutenberg University Mainz,
  Institute for Mathematics\newline
  Staudingerweg 9, 55099 Mainz, Germany
}
\email{birkner@mathematik.uni-mainz.de}
\address{Technical University Munich, Fakult\"{a}t f\"{u}r Mathematik\newline
  Boltzmannstra\ss{}e 3, 85748 Garching, Germany
}
\email{gantert@ma.tum.de}
\address{Johannes Gutenberg University Mainz,
  Institute for Mathematics\newline
  Staudingerweg 9, 55099 Mainz, Germany
}
\email{sebastian.steiber@gmail.com}
\subjclass[2010]{60J70, 82C22, 60K35, 60K37}
\keywords{Oriented percolation, coalescing random walks, Brownian web}
\begin{document}
\maketitle

\begin{abstract}
  We consider the backbone of the infinite cluster generated by
  supercritical oriented site percolation in dimension $1+1$. A
  directed random walk on this backbone can be seen as an ``ancestral
  lineage'' of an individual sampled in the stationary discrete-time
  contact process. Such ancestral lineages were investigated in
  \cite{BCDG13} where a central limit theorem for a single walker was
  proved.  Here, we consider infinitely many coalescing walkers on the
  same backbone starting at each space-time point. We show that, after
  diffusive rescaling, the collection of paths converges in
  distribution (under the averaged law) to the Brownian web.  Hence,
  we prove convergence to the Brownian web for a particular system of
  coalescing random walks in a dynamical random environment.  An
  important tool in the proof is a tail bound on the meeting time of
  two walkers on the backbone, started at the same time.  Our result
  can be interpreted as an averaging statement about the percolation
  cluster: apart from a change of variance, it behaves as the full
  lattice, i.e. the effect of the ``holes'' in the cluster vanishes on
  a large scale.
\end{abstract}

\section{Introduction}

Informally, the Brownian web is a system of one-dimensional coalescing
Brownian motions starting from every point in space and time. It was
first introduced in \cite{Arratia:79}, studied rigorously in
\cite{TothWerner:1998}, \cite{FINR:02} and has since then been shown
to be a scaling limit of many 1+1-dimensional coalescing structures.
See also \cite{SchertzerSunSwart2015} for an overview, historical
discussion and references.  Possibly the most natural example that
comes to mind in this respect is the system of coalescing random walks
on $\Z$ which is dual to the one-dimensional voter model (see, e.g.,
\cite{Liggett:1999}). This was shown to converge to the Brownian web
(in \cite{FINR:02} for the nearest neighbor case and in
\cite{NewmanRavishankarSun2005} in the general case).
One often interprets the voter model as a population model in which there is
always exactly one individual at each site $x \in \Z$, which can be of one of
two possible types, say. The dual system of random walks
is then naturally interpreted as ancestral lines of the individuals.
Note that while the total population is infinite,
the local population size at a site in the voter model is fixed (at one). 

There is interest in spatial population models with randomly
fluctuating local population sizes, see, e.g., \cite{Etheridge:2004},
\cite{FournierMeleard:2004}, \cite{Etheridge:2006}, \cite{BCD2016} and
the discussion and references there. In this case, ancestral lines 
are random walks in a dynamic random environment which is given by the
time reversal of the population model. 
\cite{BCDG13} considered the specific but prototypic example of the
stationary supercritical discrete time contact process. 
Its time-reversal is the backbone of the supercritical oriented
percolation cluster and in \cite{BCDG13}, a central limit theorem was
proved for such a walk, i.e., for a single ancestral lineage.
It is then a natural problem to study the joint behavior of several
or in fact of all ancestral lineages, hence a system of coalescing random
walks in a dynamic random environment. We address this problem here in the case
$d=1$. Our main result, Theorem~\ref{main2} below, shows then that on
large scales, the effect of the local population fluctuations manifests itself
only as a scaling factor compared to the case of fixed local sizes.
This in a sense rigorously confirms the approach that is often taken
in modelling spatially distributed biological populations where one exogenously fixes the local
population size by considering so-called stepping stone models,
see, e.g., \cite{Kimura:1953}, \cite{Wilkinson-Herbots:1998}.
See also Section~\ref{sect:outlook} below for
more details on the relation to the discrete time contact process
and also \cite{BCD2016} for discussion and a broader class of examples.

\subsection{Set-up} 
Let $\omega\coloneqq \{\omega(x,n): (x,n)\in \Z \times \Z\}$ be
i.i.d.\ $\mathrm{Bernoulli}(p)$ random variables.  A space-time site
$(x,n) \in \Z \times \Z$ is said to be \emph{open} if $\omega(x,n)=1$
and \emph{closed} if $\omega(x,n)=0$.  A directed \emph{open path}
from $(x,m)$ to $(y,n)$ for $m \le n$ is a sequence $x_m,\dots, x_n$
such that $x_m=x$, $x_n=y$, $|x_k-x_{k-1}| \le 1$ for
$k=m+1, \dots, n$ and $\omega(x_k,k)=1$ for all
$k=m,\dots,n$. 
We write $(x,m) \mathop{\to}\limits^\omega (y,n)$ if such an open path
exists and $(x,m) \mathop{\to}\limits^\omega \infty$ if there exists
at least one infinite directed open path starting at $(x,m)$.
\smallskip

There is $p_c = p_c(1) \in (0,1)$ such that
$\Pr\big((0,0) \mathop{\to}\limits^\omega \infty \big) > 0$ if and
only if $p > p_c$ (see e.g.\ Theorem~1 in
\cite{GrimmettHiemer:02}). We assume from now on that $p>p_c$.  Let
\begin{align}
  \label{def:cluster}
  \CC &\coloneqq \big\{(x,n) \in \Z \times \Z : (x,n) \mathop{\to}\limits^\omega \infty  \big\}
        \notag \\
  & \, = \big\{ (x,n) \in \Z \times \Z \, : \, \forall \, k > n \: \exists \, y \in \Z \text{ such that } 
  (x,n) \mathop{\to}\limits^\omega (y,k) \big\}
\end{align}
be the backbone of the space-time cluster of oriented percolation (note that $\CC$ is a function of $\omega$ 
and $|\CC|=\infty$ a.s.\ for $p>p_c$). 
\medskip

We consider walks
$X^{(x_0,t_0)}=\big(X^{(x_0,t_0)}_t \big)_{t \in \Z, t \geq t_0}$
starting 
at any space-time point $(x_0,t_0) \in \Z \times \Z$ and moving as
directed simple random walk on $\CC$. More precisely, let
\begin{equation}
  \label{Udef}
  U(x) \coloneqq \{ y \in \Z : | x-y | \le 1\}
\end{equation}
be the $\ell_\infty$-neighbourhood of site $x \in \Z$ and let $\widetilde{\omega} = \big(
\widetilde\omega{(x,n)} : x \in \Z^d, n \in \Z \big)$, where
$\widetilde{\omega}(x,n) = \big( \widetilde{\omega}(x,n)[1],
\widetilde{\omega}(x,n)[2], \widetilde{\omega}(x,n)[3]\big)$
is a uniformly chosen permutation of $U(x)$, independently
distributed for different $(x,n)$'s and independent of the
$\omega$'s. Define
\begin{align} 
\label{eq:Phidef}
\Phi(x,n) \coloneqq 
\begin{cases} 
  \widetilde{\omega}(x,n)\big[ \min\{ i : (\widetilde{\omega}(x,n)[i],n+1) \in \CC \}\big], & 
  \text{if } \CC \cap \big( U(x) \! \times \! \{n+1\}\big) \neq \emptyset, \\
  \widetilde{\omega}(x,n)[1], & \text{otherwise}.
\end{cases}
\end{align} 
Note that when $(x,n) \in \CC$, the first case occurs and $\Phi(x,n)$
is a uniform pick among those sites in $\{ y : (y,n+1) \in \CC \}$,
the $n+1$-time slice of $\CC$; when $(x,n)
\not\in \CC$, $\Phi(x,n)$ is simply a uniformly chosen neighbour of
$x$. We put 
\begin{align} 
  \label{eq:defXdynamics}
  X^{(x_0,t_0)}_{t_0} \coloneqq x_0, \quad X^{(x_0,t_0)}_{t+1} \coloneqq \Phi(X^{(x_0,t_0)}_t, t), \; t \in \Z_+. 
\end{align}
For fixed $(x_0,t_0) \in \Z \times \Z$, given $\omega$, $X^{(x_0,t_0)}$ is a (time-inhomogeneous) 
Markov chain with 
\begin{align}
  \label{eq:defXtransprob}
  & P_{\omega}( X^{(x_0,t_0)}_{t+1}=y \, \vert \, X^{(x_0,t_0)}_{t}=x ) \notag \\
  & = \indset{U(x)}(y) \times \begin{cases}
    {|(U(x) \times \{ t+1\}) \cap \CC|}^{-1}  & \text{if } (y,t+1) \in \CC,\\[0.7ex]
    0 & \parbox{14em}{if $(y,t+1) \not\in \CC$ but\\ $(U(x) \times \{ t+1\}) \cap \CC \neq \emptyset$,} \\[1.8ex]
    |U(x)|^{-1} & \text{if }  (U(x) \times \{ t+1\}) \cap \CC = \emptyset
  \end{cases}
\end{align}
and $P_{\omega}(X^{(x_0,t_0)}_{t_0}=x_0)=1$. 
In fact, \eqref{eq:defXdynamics} implements a (coalescing) stochastic flow 
with individual paths having transition probabilities given by \eqref{eq:defXtransprob}.

When $t_0=0$ is fixed, we will abbreviate $X^{(z)} \coloneqq X^{(z,0)}$ for $z \in \Z$. 
\medskip

This walk was introduced and studied in \cite{BCDG13}, we refer to
that paper for a more thorough discussion of the background and
related works.  In particular, \cite{BCDG13} describe a regeneration
construction for $X^{(x_0,t_0)}$ and derived a LLN and a quenched CLT
from it, see Theorems~1.1 and 1.3 there; the results also imply that $X^{(x_0,t_0)}$ and
$X^{(x_1,t_0)}$ are ``almost independent'' when they are far apart. We
recall in Section~\ref{sect:preliminaries} below some details from
\cite{BCDG13} that are relevant for the present study, see in particular \eqref{eq:sigmasq.formula} 
for the non-trivial variance in the CLT.  Thus, we
expect that on sufficiently large space-time scales, any 
collection $X^{(x_0,t_0)}, X^{(x_1,t_1)}, \dots, X^{(x_n,t_n)}$ should
look similar to (coalescing) random walks.

\begin{remark}
The study of random walks in dynamic random environments
is currently a very active field which we cannot survey completely here, see e.g.\
\cite{Avena+al:2011}, \cite{Hilario+al:2015}, \cite{BethuelsenVoellering:2016},
\cite{SalviSimenhaus:2018} and the references there for recent examples.
We note however that the walks we consider here are somewhat unusual with
respect to that literature because of the time directions: There, one often
considers scenarios where both the walk and the random environment have the same
``natural'' forwards in time direction as a (Markov) process whereas in our case, forwards in time
for the walk means backwards in time for the environment, namely the discrete time contact process.
More precisely, the “time-slices” of the cluster $\CC$ can be seen to be equal in distribution to the time-reversal of a
stationary discrete-time contact process $(\eta_n)$, we refer to \cite{BCDG13} for details. 
\end{remark}

\subsection{Main result: Brownian web limit in \texorpdfstring{$d=1$}{d=1}}

Before stating our main result we briefly recall a suitable definition
of the Brownian web, following for example \cite{FINR:02} or
\cite{Sun:05}.  See also \cite{SchertzerSunSwart2015} for a broader
introduction and an overview of related work.  We define a metric on
$\R^2$ by
\begin{equation*}
  \rho((x_1,t_1),(x_2,t_2)):=|\tanh(t_1)-\tanh(t_2)|\vee\l|\frac{\tanh(x_1)}{1+|t_1|}-\frac{\tanh(x_2)}{1+|t_2|}\r|
\end{equation*}
Let $R^2_c$ be the completion of $\R^2$ under $\rho$. We can think of
$R^2_c$ as the image of $[-\infty,\infty]\times[-\infty,\infty]$ under
the mapping
\[(x,t)\mapsto 
\l(\frac{\tanh(x)}{1+|t|},\tanh(t)\r)\in R^2_c,\]
i.e., $R^2_c$ can be identified with the square
$[-1,1]\times[-1,1]$ where the line $[-1,1]\times\{1\}$ and the line
$[-1,1]\times\{-1\}$ are squeezed to two single points which we call
$(*,\infty)$ and $(*,-\infty)$.

We define $\Pi$ to be the set of functions
$f:[\sigma,\infty]\lra[-\infty,\infty]$ with ``starting points''
$\sigma\in[-\infty,\infty]$, such that the mapping
$t\mapsto(f(\sigma\vee t),t)$ from $(\R,|\cdot|)$ to $(R^2_c,\rho)$ is
continuous. We consider the elements in $\Pi$ as a tuple of the
function $f$ and its starting point $\sigma$.  The set $\Pi$ together
with the metric
\begin{align} 
  \label{def:dmetric}
  d((f,\sigma),(g,\sigma')) 
  \coloneq 
  |\tanh(\sigma)-\tanh(\sigma')|\vee \hspace{-0.4em} \sup_{t \ge \sigma \wedge \sigma'} \l|\frac{\tanh(f(t\vee\sigma))}{1+|t|}-\frac{\tanh(g(t\vee\sigma'))}{1+|t|}\r|
\end{align}
becomes a complete separable metric space.  Let $\CH$ be the set of
compact subsets of $(\Pi,d)$. 
Equipped with the Hausdorff metric
\[d_{\CH}(K_1,K_2):=\sup_{(f,\sigma)\in K_1}\inf_{(g,\sigma')\in
  K_2}d((f,\sigma),(g,\sigma'))\vee \sup_{(g,\sigma')\in
  K_2}\inf_{(f,\sigma)\in K_1}d((f,\sigma),(g,\sigma')),
\]
$\CH$ is a complete separable metric space.  Let $\CB_\CH$ be the
Borel $\sigma$-algebra associated with the metric $d_{\CH}$. We can
characterize the Brownian web (BW) as follows:

\begin{definition}[Brownian web]
  The Brownian web is a $(\CH,\CB_\CH)$-valued random variable $\CW$,
  whose distribution is uniquely determined by the following properties:
  \begin{itemize}
  \item[(i)] For each deterministic $z\in \R^2$, the set
    $\CW(z):=\{(f,\sigma)\in \CW: (f(\sigma),\sigma)=z\}$ contains
    exactly one element almost surely.
  \item[(ii)] For all $z_1,...,z_k\in \R^2$, $(\CW(z_1),...,\CW(z_k))$
    is distributed as coalescing Brownian motions.
  \item[(iii)] For any countable and dense subset $D$ of $\R^2$,
    almost surely, $\CW$ is the closure of $\{\CW(z):z\in D\}$ in
    $(\Pi,d)$.
  \end{itemize}
\end{definition}

Let us give a precise definition of the system of coalescing random
walks starting from each point contained in the space-time-cluster of
oriented percolation: Let
$\CC=\{(x,n)\in \Z\times\Z:(x,n)\mathop{\to}\limits^\omega \infty\}$
be the set of all points in the space-time lattice which are connected
to infinity (as defined in \eqref{def:cluster}). If a space-time point
$z=(x,n)\in\Z\times\Z$ is in $\CC$ let
\begin{align}
  \begin{minipage}{0.8\textwidth}
    $\pi^z$ be the linearly
    interpolated path of the random walk $X^{(z)}$\\ starting from $z$ with
    dynamics \eqref{eq:defXdynamics}.
  \end{minipage} 
\end{align}
If a point $z\in\Z\times\Z$ is not in $\CC$, we choose the next point
to the left of $z$ that is connected to infinity and define $\pi^z$ as
a linearly interpolated copy of the path starting there. In formulas,
if $z=(x,n)\notin \CC$ we define
\begin{equation}
  \label{next point left}
  c((x,n)):=\max\{y \leq x : (y,n)\in \CC\}\text{ and }(\pi^z(t))_{t\geq n}:=(\pi^{c(z)}(t))_{t\geq n}.
\end{equation}

Let $\mathbf{\Gamma}$ be the collection of all paths, i.e.\ 
\begin{equation}
  \label{def:Gamma}
  \mathbf{\Gamma}:=\{\pi^z:z\in \Z\times\Z\}=\{\pi^z:z\in \CC\}.
\end{equation}
Since all paths in $\mathbf{\Gamma}$ are equicontinuous the closure of
$\mathbf{\Gamma}$, which we also denote by $\mathbf{\Gamma}$, is a
random variable taking values in $(\CH,\CB_\CH)$. 
\smallskip

In order to formulate the convergence theorem precisely we 
consider for $\delta > 0$ and $b>0$ ($b$ normalizes the 
standard deviation) the \emph{diffusive scaling map}
\begin{equation*}
  S_{b,\delta}:=(S^1_{b,\delta},S^2_{b,\delta}):(R^2_c,d)\lra(R^2_c,d),
\end{equation*}
where
\begin{align*}
  S_{b,\delta}(x,t):=(S^1_{b,\delta}(x,t),S^2_{b,\delta}(t)):=
  \begin{cases}
    (\frac{x\delta}{b},\delta^2 t),&\text{ if }(x,t)\in\R^2,\\
    (\pm\infty,\delta^2 t),&\text{ if }(x,t)=(\pm\infty,t), t\in\R,\\
    (*,\pm\infty),&\text{ if }(x,t)=(*,\pm\infty).
  \end{cases}
\end{align*}
The mapping $S_{b,\delta}$ is naturally extended to $(\Pi,d)$ via
\begin{align*}
  S_{b,\delta}:\Pi&\lra\Pi\\
  (\pi,t)&\mapsto(S^1_{b,\delta}\circ\pi,S^2_{b,\delta}\circ t).
\end{align*}
For $K \subset \Pi$ we set
$S_{b,\delta}K:=\{S_{b,\delta}((\pi,t)):(\pi,t)\in K\}$. Note that
$K\in\CH$ implies $S_{b,\delta}K\in\CH$. 
\medskip

\setcounter{theorem}{-1}
\begin{theorem}[\cite{BCDG13}]
  \label{thm0}
  There is $v \in (0,\infty)$ such that 
  conditioned on $(0,0)\in\CC$, 
  \[
  S_{v,\delta}\pi^{(0,0)} \mathop{\longrightarrow}^d_{\delta \downarrow 0} \; \text{standard Brownian motion}.
  \]
\end{theorem}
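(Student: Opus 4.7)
The plan is to prove a functional central limit theorem for a single lineage via a regeneration decomposition, in the spirit of \cite{BCDG13}. First I would construct a sequence of random times $0 \le T_1 < T_2 < \cdots$ along the trajectory of $X^{(0,0)}$ such that, under the annealed law conditioned on $(0,0) \in \CC$, the blocks $(T_{k+1}-T_k,\, X^{(0,0)}_{T_{k+1}}-X^{(0,0)}_{T_k})$ for $k \ge 1$ are i.i.d., independent of the initial block, and have finite moments of order strictly greater than $2$. Once that structure is available, the theorem reduces to Donsker's invariance principle.

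The construction of the $T_k$'s is the technical core. Because the event $\{(y,n)\in\CC\}$ is not decidable by any finite-time look-ahead, one cannot declare regeneration from purely local information. The standard remedy in supercritical oriented percolation is a ``break point'' or ``cone'' scheme: one searches for space-time positions $(X^{(0,0)}_n,n)$ from which a macroscopic open forward cone of fully percolating sites grows, the base of which separates the walk's past exploration window from its future. Coupling with an auxiliary Bernoulli sprinkling of the $\widetilde\omega$'s and invoking the exponential block estimates available for $p>p_c$, one obtains exponential tails on $T_1$; since steps of $X^{(0,0)}$ lie in $U(\,\cdot\,)$, exponential tails on $|X^{(0,0)}_{T_1}|$ follow as well.

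With the i.i.d.\ block structure at hand, set
\[
\sigma^2 \coloneqq \frac{\Var\bigl(X^{(0,0)}_{T_2}-X^{(0,0)}_{T_1}\bigr)}{E[T_2-T_1]},
\]
which is finite and strictly positive (non-degeneracy follows from the fact that within a single block both spatial increments $\pm 1$ occur with positive probability); the reflection symmetry $x\mapsto -x$ of the model kills any drift, so no centering subtlety arises. Define $v\coloneqq \sigma$. The classical CLT applied to the i.i.d.\ blocks gives convergence of the process embedded at the times $T_k$ to a standard Brownian motion under the scaling $S_{v,\delta}$. To pass from the embedded process to $S_{v,\delta}\pi^{(0,0)}$ itself, I would control the interpolation error using the moment bound on $T_1$: it implies that $\max_{k \le C\delta^{-2}}(T_{k+1}-T_k) = o(\delta^{-2})$ in probability, which translates, via the definition of the metric $d$ on $\Pi$, into vanishing distance between the embedded and linearly interpolated paths, hence tightness and identification of the limit.

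The main obstacle is the regeneration construction itself: the percolation environment must be explored both forwards in time (to verify infinite connectivity, i.e.\ membership in $\CC$) and along the walk's neighbourhood (to define $\Phi$), and these two explorations must be orchestrated so that at the $T_k$'s the past is cleanly decoupled from the future. The usual approach is to work with a partially-explored surrogate of $\CC$ whose membership is decidable after a finite, geometrically distributed look-ahead, and to bound the probability that the surrogate disagrees with $\CC$ via standard large-deviation estimates for supercritical oriented percolation. Everything else in the proof is a more or less routine application of i.i.d.\ functional CLT techniques combined with the metric structure on $(\Pi,d)$.
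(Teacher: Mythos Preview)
Your proposal is correct and follows essentially the same route as \cite{BCDG13}, which the paper simply cites for this result rather than re-proving it: the regeneration construction you sketch is exactly what is recalled in Section~\ref{sect:preliminaries} (see \eqref{eq:regenseq}--\eqref{eq:sigmasq.formula}), with i.i.d.\ increments along regeneration times having exponential tails, symmetry killing the drift, and then a standard Donsker argument. One small point: in the paper's notation the normalising constant $v$ in \eqref{eq:sigmasq.formula} is written as the variance $\widetilde\E[(Y_1)^2]/\widetilde\E[T_1]$ rather than its square root, so check whether you intend $v=\sigma$ or $v=\sigma^2$ when matching conventions with the scaling map $S_{b,\delta}$.
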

The variance $v$ has a description in terms of regeneration times,
which we recall from \cite{BCDG13} in \eqref{eq:sigmasq.formula} below
(cf.\ \cite[\text{Remark }1.2]{BCDG13}). 
\smallskip

Our main result is the following theorem.
\begin{theorem}
  \label{main2}
  The $(\CH,\CB_\CH)$-valued random variables
  $(S_{v,\delta}\mathbf{\Gamma})$ 
  converge in distribution to the
  Brownian web as $\delta \downarrow 0$.
\end{theorem}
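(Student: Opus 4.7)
I would verify a standard convergence criterion for the Brownian web that allows path crossings (of the type developed by Newman, Ravishankar and Sun, building on \cite{FINR:02}), reducing the theorem to three ingredients: (I) convergence of finite-dimensional distributions of $S_{v,\delta}\mathbf{\Gamma}$ to coalescing Brownian motions; (T) a tightness estimate for the modulus of continuity of each rescaled path; and (B) a counting bound on the expected number of distinct paths at time $t$ emerging from an interval $[a,b]\times\{0\}$, of order $(b-a)/\sqrt{t}$, together with its triple-path analogue.

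The cornerstone, as highlighted in the abstract, is a two-walker meeting-time tail of the form $\Pr(\tau_{x,y}>t)\le C(1+|y-x|)/\sqrt{t}$, where $\tau_{x,y}$ is the first time the coalescing paths $\pi^{(x,0)}$ and $\pi^{(y,0)}$ coincide. I would prove this by upgrading the regeneration construction of \cite{BCDG13} to a \emph{joint} regeneration for the pair $(X^{(x,0)},X^{(y,0)})$. During regeneration epochs in which the walkers are at macroscopic distance, the construction couples them with asymptotically independent copies, so their rescaled difference behaves like $\sqrt{2v}$ times a Brownian motion and classical gambler's-ruin estimates yield the $1/\sqrt{t}$ tail on hitting $0$. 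When the walkers are within a bounded number of lattice spacings, an ellipticity argument on the backbone $\CC$ (using that for $p>p_c$ each time slice of $\CC$ has positive density and the pair's transition probabilities \eqref{eq:defXtransprob} are uniformly bounded below on configurations of positive probability) forces coalescence with probability bounded away from $0$ within a bounded number of steps, and iterating combines these into the quantitative tail. Controlling the ``far'' and ``close'' regimes jointly, with regeneration increments having only stretched-exponential tails, is where the main technical work sits.

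Given the meeting-time bound, ingredient (I) follows by induction on the number of starting points: Theorem~\ref{thm0} handles a single walker; for two walkers, the joint regeneration produces convergence to independent Brownian motions while they remain $\omega(\delta^{-1})$-apart, and the meeting-time bound then forces coalescence within $o(\delta^{-2})$ additional steps once they come within $o(\delta^{-1})$, reproducing exactly the joint law of two coalescing Brownian motions started at the limiting positions; the step from $k$ to $k+1$ paths is routine. Ingredient (T) follows from the single-walker invariance principle together with the regeneration-time tail bounds of \cite{BCDG13}. For (B), a union-bound over non-coalescence events shows the expected number of distinct paths at time $t$ emerging from $[a,b]\cap\CC$ is dominated by a constant times $\sum_{x<y\in[a,b]\cap\CC}\Pr(\tau_{x,y}>t)$, which by the meeting-time estimate is of order $(b-a)^2/\sqrt{t}$; after the diffusive rescaling $S_{v,\delta}$ this becomes the required $O((b-a)/\sqrt{t})$ bound, and the triple-path version is obtained by iterating the argument on pairs.

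The principal obstacle, as I see it, is the joint regeneration construction and the ensuing meeting-time tail: the two walkers share the environment $\omega$, so they are genuinely correlated and independent-walker estimates cannot be quoted directly. Upgrading \cite{BCDG13} to a bivariate regeneration that keeps enough independence to run a gambler's-ruin argument in the far regime while retaining the cluster ellipticity needed in the close regime, and patching these across epochs whose lengths have only stretched-exponential tails, is where I expect essentially all of the difficulty to be concentrated; the remaining verifications are fairly standard adaptations of the analogous proofs for coalescing simple random walks.
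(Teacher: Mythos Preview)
Your overall strategy matches the paper's: verify the Newman--Ravishankar--Sun criteria for crossing paths, with the two-walker meeting-time tail as the key input, and prove that tail via the joint regeneration construction of \cite{BCDG13}. Two of the three ``routine'' verifications, however, contain genuine gaps.

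For your counting condition (B), the union bound is wrong twice. First, summing $\Pr(\tau_{x,y}>t)$ over \emph{all} pairs $x<y$ in $[a,b]$ does not bound the expected number of distinct paths; the correct inequality (used in the paper, Lemma~\ref{contains0}) restricts to \emph{nearest-neighbour} pairs: $\E[|\mathbf{\Gamma}^{B}_m|]\le 1+\sum_i \Pr(\tau_{x_i,x_{i+1}}>m)\le 1+C|B|/\sqrt{m}$, which is already linear in $|B|$. Second, your rescaling arithmetic fails: if the unrescaled bound were $C\tilde L^2/\sqrt{\tilde t}$ with $\tilde L=(b-a)v\delta^{-1}$ and $\tilde t=t\delta^{-2}$, you would obtain $C(b-a)^2/(\delta\sqrt t)\to\infty$, not $O((b-a)/\sqrt t)$. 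Note also that for crossing paths the relevant conditions in the paper are $(B_1')$ and $(E_1')$, not a ``triple-path'' analogue; the latter belongs to the non-crossing criterion and would not be available here.

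For (T), single-walker invariance plus regeneration tails does not yield tightness of the family $\mathbf{\Gamma}$ in $\mathcal{H}$. Condition $(T_1)$ asks for the probability that \emph{some} path starting in a small rectangle exits a much larger one, and a naive union bound over the $O(\delta^{-3})$ starting points diverges. The paper plants four ``sentinel'' walks $\pi^{x_{i,\delta}}$ between the two rectangles and shows that any bad path must fail to coalesce with all four; a \emph{five}-walker joint regeneration together with Remark~\ref{rem:Dhathittingbounds} then gives a per-starting-point probability of order $\big(\delta\log(1/\delta)\big)^4$, which beats the union bound. This is substantially more than a modulus-of-continuity estimate for a single path, and is where the multi-walker regeneration (cf.\ Remark~\ref{rem:Tsim.tailbound.m>2}) is genuinely needed.
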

\begin{remark}
  \label{rem:aftermain}
  1.\ An analogous result holds when $U(x) = \{ y : |y-x| \leq R\}$
  for some $R \in \N$.  Furthermore, note that even for $R=1$, paths
  in $\mathbf{\Gamma}$ can cross each other without coalescing.
  \smallskip
  
  \noindent 2.\ In the parlance of random walks in random
  environments, Theorems~\ref{thm0} and \ref{main2} are
  \emph{annealed} limit theorems, i.e., the randomness refers to
  jointly averaging the walk and the realization of the percolation
  cluster.  In fact, \cite{BCDG13} proved also a \emph{quenched}
  version of Theorem~\ref{thm0}, where a typical cluster is fixed and
  randomness refers only to the steps of the walk.  However, we
  presently do not have a quenched analogue of Theorem~\ref{main2}
  (see also the discussion in Section~\ref{sect:outlook} below).
  \smallskip
  
  \noindent 3.\ \cite{SarkarSun:2013} considered the system of
  \emph{rightmost} paths on an oriented (bond) percolation cluster and
  showed that it converges to the Brownian web after suitable
  centering and rescaling. Thus, in \cite{SarkarSun:2013}, walkers
  move to the right whenever possible (and in particular they cannot
  cross each other) whereas in our set-up, the walks pick uniformly
  among the allowed neighbors.
\end{remark}
We prove Theorem~\ref{main2} in Section~\ref{sect:proof} and discuss
some implications and further questions in Section~\ref{sect:outlook}.

\section{Proofs}
\label{sect:proof}
\begin{remark}
  In the proofs that follow $C$ and $c$ denote some positive constants whose exact value is not important for the argument. 
  The constants $C$ and $c$ may also vary within a chain of inequalities. If the value of a certain constant is important for a later step, we add a subscript to it $C_1,C_2,...$.
\end{remark}

\subsection{Preliminaries} 
\label{sect:preliminaries}
Here, we briefly recall concepts and results from \cite{BCDG13} that will be required for our arguments.

For $z=(x,n) \in \Z \times \Z$, writing $B_z \coloneq \{ z \in \CC
\}$, we abbreviate 
\begin{align}
  \widetilde\Pr_z(\cdot) := \Pr(\cdot \,|B_z) \quad \text{and} \quad 
  \widetilde\Pr_{z_1,z_2}(\cdot) := \Pr(\cdot \,|B_{z_1}\cap B_{z_2}) .
\end{align}

\cite[Sections~2.1--2.2]{BCDG13} describes a regeneration
construction for $(X^z_t)_{t\ge 0}$: There are random times $0 = T^z_0
< T^z_1 < T^z_2 < \dots$ such that with $Y^z_i \coloneq X^z_{T^z_i} -
X^z_{T^z_{i-1}}$, the sequence of space-time increments along the
regeneration times $T^z_i$
\begin{align} 
  \label{eq:regenseq}
  \l(Y^z_i, T^z_i - T^z_{i-1}\r)_{i=1,2,\dots} \quad 
  \text{is i.i.d.\ under } \Pr(\cdot \mid B_z)
\end{align}
with $\widetilde\Pr_z\l( |Y^z_1| \ge n \r), \widetilde\Pr_z\l( T^z_1-T^z_0 \ge n \r) \le C e^{-c n}$ 
(see \cite[Lemma~2.5]{BCDG13}). 
By symmetry, $\widetilde\E_z[Y^z_1 ] = 0$. In fact, \eqref{eq:regenseq} already yields an (annealed)
central limit theorem with limit variance
\begin{align} 
  \label{eq:sigmasq.formula}
  v = \frac{\widetilde\E_z[(Y^z_1)^2]}{\widetilde\E_z[ T^z_1]} \in (0,\infty),
\end{align}
see \cite[Remark~1.2]{BCDG13}.
(In \cite{BCDG13}, all this is formulated for $z=(0,0)$ but by
shift-invariance of the joint distribution of $\omega$ and
$\widetilde{\omega}$, it holds for any $z \in \Z \times \Z$ and in
particular 
$v$ in \eqref{eq:sigmasq.formula} does not depend on $z$.) 
\medskip

For $z_1=(x_1,0), z_2=(x_2,0) \in \Z \times \Z$ consider the 
\emph{simultaneous regeneration times} $0=T^\mathrm{sim}_0<T^\mathrm{sim}_1<T^\mathrm{sim}_2<\dots$ 
for $X^{z_1}$ and $X^{z_2}$, defined via 
\begin{align} 
  \l\{ T^\mathrm{sim}_i : i \in \N_0 \r\} = 
  \l\{ T^{z_1}_j : j \in \N_0 \r\} \cap \l\{ T^{z_2}_{j'} : j' \in \N_0 \r\} 
\end{align}
(we have $T^\mathrm{sim}_i<\infty$ a.s.\ for all $i$, see \cite[Section~3.1]{BCDG13}).
In our notation we suppress the dependence of $T^\mathrm{sim}_i$ on the starting points $z_1, z_2$.
\smallskip

Write $\widehat{X}^{z_1}_\ell \coloneq X^{z_1}_{T^\mathrm{sim}_\ell},
\widehat{X}^{z_2}_\ell \coloneq X^{z_2}_{T^\mathrm{sim}_\ell}$,
$\ell\in\N_0$.  In \cite[Section~3.1]{BCDG13} it is shown (with a
slightly different notation, see also \cite[Remark~3.3]{BCDG13}) that
the sequence of pairs of path increments between the simultaneous
regeneration times,
\begin{align} 
  \l( \Big( X^{z_1}_{t+T^\mathrm{sim}_{\ell-1}}-\widehat{X}^{z_1}_{\ell-1}\Big)_{0 \le t \le T^\mathrm{sim}_\ell - T^\mathrm{sim}_{\ell-1}}, 
  \Big( X^{z_2}_{t+T^\mathrm{sim}_{\ell-1}}-\widehat{X}^{z_2}_{\ell-1}\Big)_{0 \le t \le T^\mathrm{sim}_\ell - T^\mathrm{sim}_{\ell-1}}, 
  T^\mathrm{sim}_\ell - T^\mathrm{sim}_{\ell-1} \r)_{\ell\in\N}
\end{align}
forms a Markov chain under $\widetilde\Pr_{z_1,z_2}$, see \cite[Lemma~3.2]{BCDG13}. 
Furthermore, the transition probabilities depend only on the current positions $\big(\widehat{X}^{z_1}_\ell, \widehat{X}^{z_2}_\ell\big)$
and the increments between simultaneous regeneration times have uniformly exponentially bounded tails, 
\begin{align} 
  \label{eq:Tsim.tailbound}
  \widetilde\Pr_{z_1,z_2}\Big( T^\mathrm{sim}_\ell - T^\mathrm{sim}_{\ell-1} > n \Big) 
  \le C e^{-c n} \quad 
  \text{for all } n\in\N, \, z_1=(x_1,0), z_2=(x_2,0) \in \Z\times\Z
\end{align}
(see \cite[Lemma~3.1]{BCDG13}). In particular, 
$\big(\widehat{X}^{z_1}_\ell, \widehat{X}^{z_2}_\ell\big)_{\ell \in\N_0}$ is itself a Markov 
chain under $\widetilde\Pr_{z_1,z_2}$ and -- by shift invariance of the 
joint distribution of $\omega$ and $\widetilde{\omega}$ -- its transition matrix is invariant under 
simultaneous shifts in both coordinates, i.e.\ 
\begin{align} 
  & \widetilde\Pr_{z_1,z_2}\Big( \widehat{X}^{z_1}_{\ell+1} = y_1', \widehat{X}^{z_2}_{\ell+1} = y_2' \, \Big| \, 
    \widehat{X}^{z_1}_\ell = y_1, \widehat{X}^{z_2}_\ell = y_2 \Big) \notag \\
  & = \widetilde\Pr_{z_1,z_2}\Big( \widehat{X}^{z_1}_{\ell+1} = y_1'+y, \widehat{X}^{z_2}_{\ell+1} = y_2'+y \, \Big| \, 
    \widehat{X}^{z_1}_\ell = y_1+y, \widehat{X}^{z_2}_\ell = y_2+y \Big)
\end{align}
for all $y_1,y_1',y_2,y_2',y \in \Z$. Thus $\widehat{D}^{z_1,z_2}_\ell
:= \widehat{X}^{z_1}_\ell-\widehat{X}^{z_2}_\ell$, the difference of
the two walks along simultaneous regeneration times, forms also a
Markov chain; we denote its transition matrix by
$\widehat{\Psi}^\mathrm{joint}_\mathrm{diff}$ 
(in the notation of \cite[Lemma~3.3]{BCDG13}, we have $\widehat{\Psi}^\mathrm{joint}_\mathrm{diff}(x,y) 
= \sum_{z \in \Z} \widehat{\Psi}^\mathrm{joint}\big((x,0), (z+y,z)\big)$). 

Because $X^{z_1}$ and $X^{z_2}$ have bounded increments, \eqref{eq:Tsim.tailbound} implies 
an exponential tail bound for jump sizes under $\widehat{\Psi}^\mathrm{joint}_\mathrm{diff}$: 
\begin{align} 
  \label{eq:Psijointtail}
  \widehat{\Psi}^\mathrm{joint}_\mathrm{diff}(x,y) \le C e^{-c|y-x|} \quad \text{for } x,y \in \Z.
\end{align}

One can implement the same construction when the two walks $X^{z_1}$
and $X^{z_2}$ move independently on independent copies of the oriented
percolation cluster (formally, let $\omega'$ be an independent copy of
$\omega$ and $\widetilde\omega'$ an independent copy of
$\widetilde\omega$, then construct $X^{z_2}$ by using $\omega'$ and
$\widetilde\omega'$ in \eqref{eq:Phidef} and \eqref{eq:defXdynamics};
we condition now on $z_1 \mathop{\to}^\omega \infty$ and $z_2
\mathop{\to}^{\omega'} \infty$).
Then $(\widehat{D}^{z_1,z_2}_\ell)_\ell$ is again a Markov chain on $\Z$, we denote 
its transition probability matrix in this case by $\widehat{\Psi}^\mathrm{ind}_\mathrm{diff}$. 
In fact, $\widehat{\Psi}^\mathrm{ind}_\mathrm{diff}$ is irreducible, symmetric and spatially 
homogeneous
(i.e., $\widehat{D}^{z_1,z_2}$ is now a symmetric random walk) 
with exponentially bounded tails
\begin{align}
  \widehat{\Psi}^\mathrm{ind}_\mathrm{diff}(x,y) = \widehat{\Psi}^\mathrm{ind}_\mathrm{diff}(y,x) 
  = \widehat{\Psi}^\mathrm{ind}_\mathrm{diff}(0,y-x) \le C e^{-c |y-x|} \quad 
  \text{for } x,y \in \Z ,
\end{align}
see \cite{BCDG13}, Section~3.1, especially the discussion after Remark~3.3.
\smallskip

Using a coupling construction and space-time mixing properties of the percolation cluster, 
one finds the following lemma. 
\begin{lemma}[{\cite[Lemma~3.4]{BCDG13}}]
  \label{lemma:BCDG.Lemma3.4}
  We have ($\norm{\cdot}_\mathrm{TV}$ denotes total variation distance)
  \begin{align} 
    \Norm{\widehat{\Psi}^\mathrm{joint}_\mathrm{diff}(x,\cdot) - \widehat{\Psi}^\mathrm{ind}_\mathrm{diff}(x,\cdot)}_\mathrm{TV} 
    \leq C e^{-c|x|} \quad 
    \text{for all } x \in \Z.
  \end{align}
\end{lemma}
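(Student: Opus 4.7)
The plan is to establish Lemma~\ref{lemma:BCDG.Lemma3.4} by constructing an explicit coupling of the joint dynamics (both walks on the same cluster $\omega$) and the independent dynamics (walk~1 on $\omega$, walk~2 on an independent copy $\omega'$), under which the two evolutions of $\widehat{D}^{z_1,z_2}_1$ coincide on a good event whose complement has probability $\le Ce^{-c|x|}$.

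First, I would recall from the regeneration construction of \cite{BCDG13} that the first simultaneous regeneration $T^\mathrm{sim}_1$ together with the increment $\bigl(X^{z_1}_t - x_1, X^{z_2}_t - x_2\bigr)_{0 \le t \le T^\mathrm{sim}_1}$ is measurable with respect to $\omega$ and $\widetilde\omega$ read off inside two space-time cones: a cone $\COne_1$ emanating from $z_1$ and a cone $\COne_2$ emanating from $z_2$, each of Lipschitz spread bounded by some constant $L$ in the time direction (this Lipschitz behaviour is what yields \eqref{eq:Tsim.tailbound}). The exploration region used to decide regeneration, the walker's increments, and the conditioning events $B_{z_1},B_{z_2}$ are all contained in these two cones, restricted to $0 \le t \le T^\mathrm{sim}_1$.

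Next, I would build the coupling as follows. Starting from a common $(\omega,\widetilde\omega)$ used to drive the joint system, set $\omega' \coloneqq \omega$ on $\COne_2$ and let $\omega'$ be independent of $\omega$ on the complement (similarly for $\widetilde\omega'$). Then $(\omega,\widetilde\omega,\omega',\widetilde\omega')$ has exactly the law used to define $\widehat{\Psi}^\mathrm{ind}_\mathrm{diff}$, and the walk $X^{z_2}$ driven by $(\omega',\widetilde\omega')$ coincides with the one driven by $(\omega,\widetilde\omega)$ up to time $T^\mathrm{sim}_1$ provided $\COne_1 \cap \COne_2 = \emptyset$ on the relevant time window. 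Introduce the good event
\begin{equation*}
  G \coloneqq \bigl\{ \COne_1 \cap \COne_2 = \emptyset \text{ on } \{0,\dots,T^\mathrm{sim}_1\} \bigr\}.
\end{equation*}
On $G$ the one-step transition matrices applied to $(z_1,z_2)$ are identical under both laws, so the total variation distance between $\widehat{\Psi}^\mathrm{joint}_\mathrm{diff}(x,\cdot)$ and $\widehat{\Psi}^\mathrm{ind}_\mathrm{diff}(x,\cdot)$ is bounded by $\widetilde\Pr_{z_1,z_2}(G^c)$.

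Finally, to bound $\widetilde\Pr_{z_1,z_2}(G^c)$: the two cones rooted at points with horizontal distance $|x|$ can meet within time $T^\mathrm{sim}_1 = n$ only if $2 L n \ge |x|$, i.e. $n \ge |x|/(2L)$. Hence
\begin{equation*}
  \widetilde\Pr_{z_1,z_2}(G^c) \le \widetilde\Pr_{z_1,z_2}\bigl(T^\mathrm{sim}_1 \ge |x|/(2L)\bigr) \le C e^{-c|x|/(2L)},
\end{equation*}
by the exponential tail estimate \eqref{eq:Tsim.tailbound}, which yields the desired bound after adjusting constants. The main obstacle is making the cone construction precise and verifying that the conditioning events $B_{z_1},B_{z_2}$, as well as the notion of simultaneous regeneration, are truly local to these cones so that swapping $\omega$ and $\omega'$ outside is innocuous; this is exactly the content of the explicit regeneration construction in \cite[Section~3.1]{BCDG13}, and the verification is where most of the work sits.
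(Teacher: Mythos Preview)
The paper does not prove this lemma; it is quoted from \cite[Lemma~3.4]{BCDG13}, preceded only by the remark that it follows from ``a coupling construction and space-time mixing properties of the percolation cluster''. Your overall strategy---couple via the locality of the regeneration increment inside two space-time cones, then invoke the exponential tail \eqref{eq:Tsim.tailbound} to show the cones rarely overlap---is exactly that idea.

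There is, however, a concrete error in the coupling you write down. You set $\omega' = \omega$ on $\COne_2$ and resample on the complement, and then claim that $(\omega,\widetilde\omega,\omega',\widetilde\omega')$ has the law used to define $\widehat\Psi^{\mathrm{ind}}_{\mathrm{diff}}$. It does not: $\omega$ and $\omega'$ agree on $\COne_2$ and are therefore dependent, whereas $\widehat\Psi^{\mathrm{ind}}_{\mathrm{diff}}$ requires genuinely independent environments. Since the deterministic cones $\COne_1$ and $\COne_2$ overlap above height $|x|/(2L)$, walk~1 on $\omega$ is correlated with walk~2 on $\omega'$ through $\omega|_{\COne_1\cap\COne_2}$, so the pair you construct is not an instance of the independent dynamics and the TV bound does not follow from $\Pr(G^c)$ alone. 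The standard repair is to reverse the construction: start from truly independent $(\omega,\omega')$ so that the independent side has the correct law by definition, and then build a \emph{single} environment $\bar\omega$ for the joint side by gluing $\omega$ and $\omega'$ along a deterministic interface (for instance the half-spaces on either side of the midpoint of $x_1$ and $x_2$); the good event becomes that neither exploration crosses this interface before $T^\mathrm{sim}_1$. Making the non-local conditioning on $B_{z_1}, B_{z_2}$ compatible with such a gluing is indeed where the regeneration machinery of \cite{BCDG13} enters, as you acknowledge at the end.
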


\begin{remark} 
  \label{rem:Tsim.tailbound.m>2}
  One can in complete analogy to the construction for $m=2$ walks
  consider joint regeneration times for any number $m \ge 2$ of walks
  $X^{(x_1,n_1)}, \dots, X^{(x_m,n_m)}$ (obviously, joint regeneration
  can then only occur after ``real'' time
  $\max\{n_1,n_2,\dots,$ $n_m\}$). In fact, in
  Section~\ref{subsect:checkingT1} we will consider
  the case $m=5$.

  Following the construction in \cite[Section~3]{BCDG13}, one
  obtains that a tail bound for increments between joint regeneration
  times analogous to \eqref{eq:Tsim.tailbound} also holds in this
  case.
\end{remark}

\subsection{A bound on the meeting time for two walks on the cluster}

Let $T^{(z_1, z_2)}_{\mathrm{meet}} \coloneqq \inf\{ n \ge 0 : X^{(z_1)}_n = X^{(z_2)}_n\}$ (with the usual convention 
$\inf\emptyset = +\infty$).

\begin{lemma} 
  \label{main1}
  There is $C=C(p)<\infty$ such that
  \begin{align}
    \label{eq:tailbounds} 
    \widetilde\Pr_{z_1,z_2}(T^{(z_1, z_2)}_{\mathrm{meet}} > n) \le C \frac{|z_1-z_2|}{\sqrt{n}}  
    \quad \text{for } z_1, z_2 \in \Z, \, n \in \N .
  \end{align}
  In particular,
  $\widetilde\Pr_{z_1,z_2}(T^{(z_1, z_2)}_{\mathrm{meet}} < \infty) = 1$
  and hence also $P_\omega(T^{(z_1, z_2)}_{\mathrm{meet}} < \infty) = 1$
  for $\Pr$-almost all $\omega \in B_{(z_1,0)} \cap B_{(z_2,0)}$.
\end{lemma}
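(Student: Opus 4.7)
The plan is to reduce the meeting time to the hitting time of $0$ by the difference chain $\widehat{D}^{z_1,z_2}_\ell = \widehat{X}^{z_1}_\ell - \widehat{X}^{z_2}_\ell$ along the simultaneous regeneration times, and then estimate that hitting time via the comparison with the symmetric, spatially homogeneous random walk $\widehat{D}^{\mathrm{ind}}$ supplied by Lemma~\ref{lemma:BCDG.Lemma3.4}. Setting $\widehat{\tau}_0 := \inf\{\ell \ge 0 : \widehat{D}^{z_1,z_2}_\ell = 0\}$, one has $T_{\mathrm{meet}}^{(z_1,z_2)} \le T^{\mathrm{sim}}_{\widehat{\tau}_0}$ by construction, and a Cram\'er-type deviation bound based on the uniform exponential tails \eqref{eq:Tsim.tailbound} of the inter-regeneration gaps yields $\widetilde{\Pr}_{z_1,z_2}\bigl(T^{\mathrm{sim}}_{\lfloor c_0 n\rfloor} > n\bigr) \le C e^{-cn}$ for a suitable $c_0 > 0$. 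It is therefore enough to prove the target bound on the regeneration time scale, namely $\widetilde{\Pr}_{z_1,z_2}(\widehat{\tau}_0 > k) \le C|z_1-z_2|/\sqrt{k}$, and then apply it with $k \asymp n$.

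For the estimate on $\widehat{\tau}_0$, the starting point is that Lemma~\ref{lemma:BCDG.Lemma3.4} together with the exponential tail \eqref{eq:Psijointtail} implies that the one-step mean $m(x)$ of $\widehat{D}^{\mathrm{joint}}$ satisfies $|m(x)| \le Ce^{-c|x|}$ (since $\widehat{\Psi}^{\mathrm{ind}}_{\mathrm{diff}}$ is symmetric, hence mean-zero), and its one-step variance $v(x)$ is uniformly bounded above and bounded below by a positive constant once $|x|$ is sufficiently large. For the symmetric walk $\widehat{D}^{\mathrm{ind}}$ the classical bound $\Pr_x(\tau_0 > k) \le C|x|/\sqrt{k}$ is standard (via the local CLT combined with a Tauberian argument, or by a direct martingale computation on $S_{\ell \wedge \tau_0}^2 - v_\infty(\ell \wedge \tau_0)$). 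To transfer this bound to $\widehat{D}^{\mathrm{joint}}$ I would couple the two chains step by step using Lemma~\ref{lemma:BCDG.Lemma3.4}, so that they agree at a step started from $x$ with conditional probability $\ge 1 - Ce^{-c|x|}$; equivalently, apply the optional stopping theorem directly to the approximate submartingale $\widehat{D}_{\ell \wedge \widehat{\tau}_0}^2 - v_\infty(\ell \wedge \widehat{\tau}_0)$, whose correction terms are summable in $|\widehat{D}_\ell|$.

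The main technical obstacle is that $\widehat{D}^{\mathrm{joint}}$ is only asymptotically symmetric and the perturbation from symmetry is worst precisely near $0$, where a symmetric random walk already spends a macroscopic fraction of its time: naively summing the coupling error $Ce^{-c|\widehat{D}_\ell|}$ over $\ell \le k$ produces a loss of order $\sqrt{k}$, which would swamp the desired $|z_1-z_2|/\sqrt{k}$ bound. The way around this is to separate two regimes with a threshold $M$ chosen large enough (possibly slowly growing with $k$): in the "tail" regime $|x| > M$ the coupling to $\widehat{D}^{\mathrm{ind}}$ succeeds over $\Omega(k)$ steps with high probability, so $\widehat{D}^{\mathrm{joint}}$ literally follows a symmetric random walk until it enters $\{|x| \le M\}$; in the "bulk" regime $|x| \le M$ a separate hitting-time analysis for the finite-variance chain $\widehat{D}^{\mathrm{joint}}$ on a bounded set of states has to be carried out using its Markovian structure and the exponential tail of its jumps. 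Stitching the two estimates together while preserving the linear dependence on the initial displacement $|z_1-z_2|$ is where the bulk of the technical work lies; the final tail bound on $T_{\mathrm{meet}}^{(z_1,z_2)}$ then follows by combining the regeneration-scale estimate with the exponential time-conversion above.
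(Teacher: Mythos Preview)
Your overall architecture matches the paper's: reduce to the regeneration time scale via the exponential tails \eqref{eq:Tsim.tailbound}, then study the hitting time of $0$ for the difference chain $\widehat D$ with transition kernel $\widehat{\Psi}^{\mathrm{joint}}_{\mathrm{diff}}$, splitting into an ``outside'' regime (far from $0$) and an ``inside'' regime (a bounded window around $0$, handled by an excursion/irreducibility argument). The difference lies in how the outside regime is treated.

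You propose a step-by-step coupling to the symmetric walk $\widehat D^{\mathrm{ind}}$, with the threshold $M$ possibly growing with $k$. The accumulation issue you flag is real, and your resolution is not quite closed: with $M$ fixed the total coupling error over $k$ steps is of order $k e^{-cM}$, which diverges; with $M \asymp \log k$ the error is controlled, but then to finish you need a tail bound of the form $\Pr_y(\widehat\tau_0 > k) \le C|y|/\sqrt{k}$ for $|y| \le C'\log k$, which is essentially the statement you are trying to prove. This circularity can be broken (e.g.\ by a bootstrap), but it is where your sketch is genuinely incomplete. Your alternative via the approximate submartingale $\widehat D_{\ell\wedge\widehat\tau_0}^2 - v_\infty(\ell\wedge\widehat\tau_0)$ gives information about $\E[\widehat\tau_0]$, not directly the tail $\Pr(\widehat\tau_0 > k) \le C|x|/\sqrt{k}$.

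The paper bypasses coupling entirely by constructing an explicit Lyapunov function
\[
f(x) = \int_0^{|x|} \exp\!\big(2e^{-c_1 y}/c_1\big)\,dy,
\]
which satisfies $|x| \le f(x) \le e^{2/c_1}|x|$ and is \emph{exactly} superharmonic for $\widehat{\Psi}^{\mathrm{joint}}_{\mathrm{diff}}$ on $\{|x| > x_0\}$ for a suitable fixed $x_0$ and small $c_1$ (this uses only the exponentially decaying drift from Lemma~\ref{lemma:BCDG.Lemma3.4} and a second-order Taylor expansion). Then $Z_n = f(\widehat D_{n\wedge\tau_{x_0}})$ is a nonnegative supermartingale with uniformly lower-bounded conditional variance, and a standard supermartingale hitting-time bound (e.g.\ \cite[Prop.~17.20]{LPW:09}) gives $\Pr^{\mathrm{joint}}_x(\tau_{x_0} > n) \le C|x|/\sqrt{n}$ directly, with no error accumulation and no growing threshold. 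The inside regime is then handled with a fixed window $[-x_0,x_0]$ via the geometric-excursion decomposition you also have in mind. The Lyapunov function is the missing idea that makes the outside estimate clean.
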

Instead of conditioning on $(z_1,0), (z_2,0) \in \CC$ in \eqref{eq:tailbounds}, we could 
also pick the ``nearest'' connected sites (say, on the left, as in \eqref{next point left}) 
without changing the statement.
\medskip

We are interested in collision events of two directed random walks
$X^{(z_1)}, X^{(z_2)}$ moving on the same space-time cluster $\CC$,
i.e., we ask that the two walks are at the \emph{same time} at the
\emph{same site}. Lemma~\ref{main1} tells us in particular that a
collision event between two random walks occurs almost surely in
dimension $d=1$. This is not completely obvious a priori because
``holes'' in the space-time cluster $\CC$ might at least in principle
prevent such collisions. However, the right-hand side of \eqref{eq:tailbounds} 
is -- modulo a constant -- also the correct order for the corresponding 
probability for two simple random walks on $\Z$, so that in this sense, 
the holes in the cluster do not have a strong influence.
\medskip

Fix $z_1, z_2$, put 
\begin{align} 
  \widehat{T}^{(z_1, z_2)}_{\mathrm{meet}} := \inf\big\{ \ell \in \N : \widehat{X}^{z_1}_\ell = \widehat{X}^{z_2}_\ell \big\} 
  = \inf\big\{ \ell \in \N : \widehat{D}^{z_1,z_2}_\ell = 0 \big\}. 
\end{align}
In view of \eqref{eq:Tsim.tailbound}, it suffices to establish that there is 
a constant $C=C(p) < \infty$ such that 
\begin{align}
  \label{eq:tailbounds.hat} 
  \widetilde\Pr_{z_1,z_2}\big(\widehat{T}^{(z_1, z_2)}_{\mathrm{meet}} > n\big) \le C \frac{|z_1-z_2|}{\sqrt{n}}  
  \quad \text{for } z_1, z_2 \in \Z, \, n \in \N.
\end{align}
(To pass from $\widehat{T}^{(z_1, z_2)}_{\mathrm{meet}}$ to
$T^{(z_1, z_2)}_{\mathrm{meet}}$ note that if $k(n)$ denotes the last
simultaneous regeneration time before time $n$, for sufficiently small
$c>0$, the probability of the event $\{ k(n)/n < c\}$ decays
exponentially as $n\to\infty$.)

The key ingredient for the proof of Lemma~\ref{main1} is the estimate
on the total variation error between
$\widehat{\Psi}^\mathrm{joint}_\mathrm{diff}$ and
$\widehat{\Psi}^\mathrm{ind}_\mathrm{diff}$ recalled in
Lemma~\ref{lemma:BCDG.Lemma3.4}.  Lemma~\ref{main1} is thus in a sense
a ``trivial'' instance of a so-called Lamperti problem,
$(\widehat{D}^{z_1,z_2}_\ell)_\ell$ is under $\widetilde\Pr_{z_1,z_2}$
a Markov chain that is a local perturbation of a symmetric random walk
and the drift at $x$ vanishes exponentially fast in $|x|$.  A very
fine analysis in the case of $\pm 1$-steps can be found in
\cite{Alexander:11}, see also the references there for background.
\cite{DenisovKorshunovWachtel:arXiv161201592} have established a
generalization of Alexander's results to the non-nearest neighbour
case which in particular refines \eqref{eq:tailbounds} to asymptotic
equivalence as $n\to\infty$ (see
\cite{DenisovKorshunovWachtel:arXiv161201592} Thm.~5.11 and
Lemma~5.12; cf also Cor.~5.16 for the hitting time of a point instead
of a half-interval). A recent and equally enjoyable reference on
Lamperti problems is \cite{MPW17}.  For completeness' sake we present
here a short, rough proof of the coarser estimate that suffices for
our purposes.  (More detailed arguments can also be found in
\cite[Chapter~2]{Steiber:17}.)  
\medskip

\begin{proof}[(Sketchy) proof of Lemma~\ref{main1}] 
  Write $(\widehat{D}_n)_{n\in\N_0}$ for the Markov chain on $\Z$ 
  with transition probabilities $\widehat{\Psi}^\mathrm{joint}_\mathrm{diff}$. 
  For $x\in\Z$ we will write here $\Pr^\mathrm{joint}_x$ for a probability measure under which 
  this Markov chain starts in $x$, i.e., $\Pr^\mathrm{joint}_x(\widehat{D}_0=x)=1$.
  \smallskip

  Let us first verify that there exists $x_0 > 0$, $n_0>0$ and $C$ such that 
  \begin{align}
    \label{eq:Dhit.bd1}
    \Pr^\mathrm{joint}_x\big( \tau_{x_0} > n \big) \le C \frac{|x|}{\sqrt{n}} \quad 
    \text{for all } n \ge n_0, \, x \in \Z
  \end{align} 
  where 
  \begin{align}
    \label{eq:taux0}
    \tau_{x_0} \coloneq \inf\{ n \ge 0 : |\widehat{D}_n| \le x_0 \}. 
  \end{align}
  By Lemma~\ref{lemma:BCDG.Lemma3.4} and analogous properties of $\widehat{\Psi}^\mathrm{ind}_\mathrm{diff}$ 
  we have 
  \begin{align} 
    \label{eq:Dhatcondmeanandvar}
    \big| \E[\widehat{D}_{n+1}-x | \widehat{D}_n = x] \big| \le C e^{-c x} \quad \text{and} 
    \quad \Var[\widehat{D}_{n+1} | \widehat{D}_n = x] \geq \tilde{\sigma}^2
  \end{align} 
  whenever $|x|$ is sufficiently large (for suitable $\tilde{\sigma}^2, c, C \in (0,\infty)$).

  We can find $c_1, x_0 \in (0,\infty)$ such that the function 
  \begin{align}
    \label{eq:superharmcandidate}
    f(x) = \int_0^{|x|} \exp\big( 2 e^{-c_1 y}/c_1 \big) \, dy, \quad x \in \R
  \end{align} 
  is non-negative and superharmonic for
  $\widehat{\Psi}^\mathrm{joint}_\mathrm{diff}$ in $\Z \cap
  [-x_0,x_0]^c$. This follows from Lemma~\ref{lemma:BCDG.Lemma3.4} and a Taylor
  expansion of $f$ to second order (more details are given in
  Appendix~\ref{sect:proof.superharmcandidate}).  Note that $f$ solves
  $\frac12 f''(x) + \sgn(x) e^{-c_1|x|} f'(x) = 0$ for $x \neq 0$, i.e., $f$ is
  a harmonic function for a Brownian motion with spatially
  inhomogeneous drift $\sgn(x) e^{-c_1 |x|}$. Note that $f(x)$ can in principle be
  expressed explicitly in terms of the exponential integral function 
  (see, e.g., \cite[Chapter~5]{AbramowitzStegun1964}),
  for our purposes it suffices to observe that $0 \le f(x) \le e^{2/c_1}
  |x|$. 
  \medskip 

  Thus, starting from $\widehat{D}_0=x$ with $|x|>x_0$,
  $Z_n \coloneq f(\widehat{D}_{n \wedge \tau_{x_0}})$ is a non-negative
  supermartingale with $Z_0 = f(x) \le c |x|$ and it is easy to see (cf
  \eqref{eq:Dhatcondmeanandvar}) that for some $b_1<\infty$,
  $0<b_2<\infty$
  \begin{align} 
    \big| \E[Z_{n+1} \,| \, \sigma(Z_0,\dots,Z_n)] - Z_n \big| \le b_1, \;\;\;
    \Var[Z_{n+1} \,| \, \sigma(Z_0,\dots,Z_n)] \ge b_2 \;\;
    \text{on } \{ \tau_{x_0}>n\} 
  \end{align}
  \eqref{eq:Dhit.bd1} follows then from well known tail bounds for hitting times of supermartingales (see, e.g., 
  \cite[Proposition~17.20]{LPW:09}). 
  \medskip

  Obtaining \eqref{eq:tailbounds.hat} from \eqref{eq:Dhit.bd1} is a
  fairly standard argument for irreducible Markov chains: We can find
  $M<\infty$, $\varepsilon > 0$ such that
  \begin{align} 
    \inf_{|x| \leq x_0} \Pr^\mathrm{joint}_x\big( \widehat{D} \text{ hits $0$ within at most $M$ steps without exiting 
    $[-x_0,x_0]$ before}\big) \ge \varepsilon.
  \end{align}
  Thus, starting from some $x \in [-x_0,x_0]$, the path of $\widehat{D}$
  before hitting $0$ can be decomposed into an at most geometrically
  distributed number of ``outside excursions'' out of $[-x_0,x_0]$ and
  path pieces inside $[-x_0,x_0]$, plus the final piece inside
  $[-x_0,x_0]$ when $0$ is hit for the first time.  By
  \eqref{eq:Dhit.bd1} and the (exponential) tail bounds on jumps sizes
  for $\widehat{\Psi}^\mathrm{joint}_\mathrm{diff}$, the tail of the
  length distribution of an outside excursion is bounded by $C/\sqrt{n}$
  (uniformly in $n\ge n_0$ and the starting point inside), the length
  distribution of the pieces ``inside'' has (again uniformly in the
  starting point inside) exponentially decaying tails. It is well known
  that a geometric sum of non-negative random variables with a tail
  bound of the form $C/\sqrt{n}$ again satisfies such a tail bound (with
  an enlarged $C$), thus there are $C<\infty$ and $n_0 \in \N$ with
  \begin{align}
    \label{eq:Dhit.bd2}
    \sup_{|x| \leq x_0} \Pr^\mathrm{joint}_x\big( \inf\{ \ell \ge 0 : \widehat{D}_\ell = 0 \} > n \big) \leq \frac{C}{\sqrt{n}} \quad 
    \text{for all } n \ge n_0,
  \end{align}
  see, e.g.\ the proof of Corollary~5.16 in \cite{DenisovKorshunovWachtel:arXiv161201592}. 

  Now \eqref{eq:tailbounds.hat}, with a suitably enlarged $C$, is
  for $|z_1-z_2| \leq x_0$ immediate from \eqref{eq:Dhit.bd2}, for $|z_1-z_2|
  > x_0$ it follows from \eqref{eq:Dhit.bd1} and \eqref{eq:Dhit.bd2}
  since
  \[
  \widetilde\Pr_{z_1,z_2}(\widehat{T}^{(z_1, z_2)}_{\mathrm{meet}} > n) \le 
  \Pr^\mathrm{joint}_{z_1-z_2}\big( \tau_{x_0} > n/2 \big) + 
  \sup_{|x|<x_0} \Pr^\mathrm{joint}_x\big( \inf\{ \ell \ge 0 : \widehat{D}_\ell = 0 \} > n/2 \big).
  \]
\end{proof}

\begin{remark}
  \label{rem:Dhathittingbounds}
  Put $\sigma_y \coloneq \inf\{ n \ge 0 : |\widehat{D}_n| \ge y \}$ (and recall $\tau_{x_0}$ from \eqref{eq:taux0} 
  and $x_0$ from the proof of Lemma~\ref{main1}). 
  We see from the proof of Lemma~\ref{main1} that there exist $y_0 \in \N$ and $c < \infty$ so that 
  \begin{align}
    \label{eq:Dhathittingbounds}
    \Pr^\mathrm{joint}_x(\tau_{x_0} > \sigma_y) \le c \frac{x}{y} \quad 
    \text{for all } 2x_0 < x < y \; \text{ and } \; y \ge y_0 .
  \end{align}
\end{remark}
\begin{proof} 
  With $f$ from \eqref{eq:superharmcandidate} and $\tau \coloneq \tau_{x_0} \wedge \sigma_y$, 
  the process $(f(\widehat{D}_{n \wedge \tau}))_{n\in\N_0}$ is a non-negative supermartingale (w.r.t.\ the 
  filtration generated by the Markov chain $\widehat{D}$), thus by optional stopping 
  \begin{align} 
    f(x) & \ge \E^\mathrm{joint}_x\big[ f(\widehat{D}_{\tau})\big] \notag \\
         & = \Pr^\mathrm{joint}_x(\tau_{x_0} > \sigma_y) \E^\mathrm{joint}_x\big[ f(\widehat{D}_{\tau}) \, \big| \, 
           \tau_{x_0} > \sigma_y \big] \notag \\
         & \hspace{2em} + \big( 1 - \Pr^\mathrm{joint}_x(\tau_{x_0} > \sigma_y) \big) 
           \E^\mathrm{joint}_x\big[ f(\widehat{D}_{\tau}) \, \big| \, \tau_{x_0} < \sigma_y \big] \notag \\
         & \geq \Pr^\mathrm{joint}_x(\tau_{x_0} > \sigma_y) \big( f(y) - f(x_0) \big) 
           + f(x_0).
  \end{align}
  This together with $|x| \leq f(x) \leq e^{2/c_1} |x|$ implies \eqref{eq:Dhathittingbounds}.
\end{proof}
\smallskip

The following lemma allows to control the undesirable situation that two walks
come close but then separate again and spend a long time apart before
eventually coalescing. We will need this in Section~\ref{subsect:CheckI1} below
(Checking condition\ $(I_1)$, Step~2).
\begin{lemma}
  \label{lemma:tmeet-tneartight}
  For $z_1=(x_1,t_1), z_2=(x_2,t_2) \in \Z \times \Z$ write
  \begin{align}
    \label{def:Tnearz1z2}
    T^{z_1,z_2}_{\mathrm{near}}
    & := \inf \big\{ t \in \Z, t \ge t_1 \vee t_2 : | \pi^{z_1}(t) - \pi^{z_2}(t) | \leq 1 \big\}, \\[0.5ex]
    \label{def:Tmeetz1z2}
    T^{z_1,z_2}_{\mathrm{meet}}
    & := \inf \big\{ t \in \Z, t \ge t_1 \vee t_2 : \pi^{z_1}(t) = \pi^{z_2}(t) \big\}
      \;\; \big(\geq T^{z_1,z_2}_{\mathrm{near}}\big) .
  \end{align}
  The family $\{ T^{z_1,z_2}_{\mathrm{meet}} - T^{z_1,z_2}_{\mathrm{near}} : z_1, z_2 \in \Z \times \Z \}$
  is tight, that is
  \begin{align}
    \label{eq:tmeet-tneartight}
    \lim_{M\to\infty} \sup_{z_1, z_2 \in \Z \times \Z}
    \Pr\big( T^{z_1,z_2}_{\mathrm{meet}} - T^{z_1,z_2}_{\mathrm{near}} \ge M \big) = 0.
  \end{align}
  In particular
  \begin{align}
    \label{eq:suppi1pi2tmeettnear}
    \lim_{M\to\infty} \sup_{z_1, z_2 \in \Z \times \Z}
    \Pr\Big( \sup \big\{  | \pi^{z_1}(t) - \pi^{z_2}(t) |
    : T^{z_1,z_2}_{\mathrm{near}} \leq t \leq T^{z_1,z_2}_{\mathrm{meet}} \big\} \ge M \Big) = 0.    
  \end{align}
\end{lemma}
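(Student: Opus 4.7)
The plan is to reduce \eqref{eq:tmeet-tneartight} to an application of Lemma~\ref{main1} after a suitable ``restart'' following $T^{z_1,z_2}_{\mathrm{near}}$. By translation invariance I may assume $t_1 \vee t_2 = 0$. At $T^{z_1,z_2}_{\mathrm{near}}$ both walks sit at points of $\CC$ at distance at most one from each other, but $T^{z_1,z_2}_{\mathrm{near}}$ is itself not a simultaneous regeneration time, so I first introduce
\[
\sigma \coloneqq \min\big\{ T^{\mathrm{sim}}_i : T^{\mathrm{sim}}_i \ge T^{z_1,z_2}_{\mathrm{near}} \big\},
\]
the first simultaneous regeneration time for the pair after $T^{z_1,z_2}_{\mathrm{near}}$. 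From the exponential inter-regeneration tail bound \eqref{eq:Tsim.tailbound}, together with a length-biasing/renewal argument, $\sigma - T^{z_1,z_2}_{\mathrm{near}}$ has exponentially decaying tails uniformly in $z_1,z_2$. Since each walk moves by at most one per step, the distance $D \coloneqq |\pi^{z_1}(\sigma) - \pi^{z_2}(\sigma)|$ satisfies $D \le 1 + 2(\sigma - T^{z_1,z_2}_{\mathrm{near}})$, and hence has uniformly bounded expectation.

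At the regeneration time $\sigma$, the construction in \cite[Section~3]{BCDG13} guarantees that, conditionally on the current positions and on $D$, the future evolution of $(\pi^{z_1},\pi^{z_2})$ has the same distribution as a fresh pair of directed walks started from those positions on an independent copy of the environment (with both starting points automatically in the corresponding cluster). Applying Lemma~\ref{main1} conditionally therefore yields
\[
\Pr\big( T^{z_1,z_2}_{\mathrm{meet}} - \sigma > n \,\big|\, D \big) \le C\,D/\sqrt{n},
\]
and splitting at $M/2$ gives
\[
\Pr\big( T^{z_1,z_2}_{\mathrm{meet}} - T^{z_1,z_2}_{\mathrm{near}} > M \big)
\le \Pr\big( \sigma - T^{z_1,z_2}_{\mathrm{near}} > M/2 \big) + \frac{C\,\E[D]}{\sqrt{M/2}},
\]
and both terms tend to $0$ as $M\to\infty$ uniformly in $z_1,z_2$, proving \eqref{eq:tmeet-tneartight}.

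The second assertion \eqref{eq:suppi1pi2tmeettnear} is immediate from \eqref{eq:tmeet-tneartight} combined with bounded increments: an induction on $k\ge 0$ gives $|\pi^{z_1}(T^{z_1,z_2}_{\mathrm{near}}+k) - \pi^{z_2}(T^{z_1,z_2}_{\mathrm{near}}+k)| \le 1 + 2k$, so the supremum on $[T^{z_1,z_2}_{\mathrm{near}}, T^{z_1,z_2}_{\mathrm{meet}}]$ is bounded by $1 + 2(T^{z_1,z_2}_{\mathrm{meet}} - T^{z_1,z_2}_{\mathrm{near}})$, reducing tightness of the supremum to the tightness of the gap already established.

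The step I expect to be most delicate is the justification of the ``restart'' at $\sigma$. Because membership in $\CC$ depends on the entire future of $\omega$, $T^{z_1,z_2}_{\mathrm{near}}$ is not a stopping time for the natural time-indexed filtration of $(\omega,\widetilde\omega)$, so a direct strong Markov application at $T^{z_1,z_2}_{\mathrm{near}}$ itself is not available. Passing to the next simultaneous regeneration $\sigma$ repairs this, since at $\sigma$ the portion of $(\omega,\widetilde\omega)$ outside the cone already explored by the joint construction is a fresh i.i.d.\ sample and both walkers sit on the corresponding new cluster. Once that restart is set up, the rest of the argument is bookkeeping on top of Lemma~\ref{main1}.
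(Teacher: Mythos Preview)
Your reduction of \eqref{eq:suppi1pi2tmeettnear} to \eqref{eq:tmeet-tneartight} via the increment bound matches the paper exactly, and you correctly identify the central difficulty: $T^{z_1,z_2}_{\mathrm{near}}$ is not a stopping time for any filtration that would allow a direct restart. Your observation that $\ell^* := \min\{i : T^{\mathrm{sim}}_i \ge T^{z_1,z_2}_{\mathrm{near}}\}$ \emph{is} a stopping time for the chunk-indexed filtration $(\mathcal{G}_\ell)$ is also correct, so the conditional application of Lemma~\ref{main1} after $\sigma$ is legitimate.

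The gap is the claim that $\sigma - T^{z_1,z_2}_{\mathrm{near}}$ has exponentially decaying tails \emph{uniformly in $z_1,z_2$} by ``length-biasing''. Classical size-biasing requires the sampled time to be independent of the renewal process; here $T^{z_1,z_2}_{\mathrm{near}}$ is strongly correlated with the regeneration intervals. Concretely, writing $d_{\ell-1} = |\widehat{X}_{\ell-1}-\widehat{X}'_{\ell-1}|$, on $\{\ell^*=\ell\}$ the $\ell$-th chunk must have length $\ge (d_{\ell-1}-1)/2$, so decomposing over $\ell$ and using \eqref{eq:Tsim.tailbound} gives at best
\[
\Pr\big(\sigma - T^{z_1,z_2}_{\mathrm{near}} \ge K\big) \;\le\; C e^{-cK}\,\E\Big[\sum_{\ell \le \ell^*} e^{-c(d_{\ell-1}-1)/2}\Big].
\]
Bounding the expectation on the right uniformly in the initial separation $|x_1-x_2|$ is an occupation-time (Green's function) estimate for the chain $\widehat{D}$, not a renewal/length-biasing fact; without it the bound degenerates as the starting distance grows.

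The paper sidesteps this entirely by restarting at a different time: it fixes $M$, stops at $\widehat{T}(M) := \inf\{\ell : |\widehat{D}_\ell| \le M\}$ (a genuine stopping time for the regeneration chain with controlled position $\le M$), and then separately bounds $\Pr\big(T^{z_1,z_2}_{\mathrm{near}} < T^{\mathrm{sim}}_{\widehat{T}(M)}\big)$ by a dyadic decomposition over the scales $\widehat{T}(2^j M)$, using that coming within distance $1$ between $\widehat{T}(2^jM)$ and $\widehat{T}(2^{j-1}M)$ forces some inter-regeneration gap of size $\gtrsim 2^{j}M$. This avoids any need to control the interval containing $T^{z_1,z_2}_{\mathrm{near}}$.
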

\begin{proof}[Proof sketch]
  \eqref{eq:suppi1pi2tmeettnear} follows from \eqref{eq:tmeet-tneartight} because
  $|\pi^{z_1}(T^{z_1,z_2}_{\mathrm{near}}) - \pi^{z_2}(T^{z_1,z_2}_{\mathrm{near}})| \leq 1$ and thus
  \[
  \sup \big\{  | \pi^{z_1}(t) - \pi^{z_2}(t) |
  : T^{z_1,z_2}_{\mathrm{near}} \leq t \leq T^{z_1,z_2}_{\mathrm{meet}} \big\}
  \leq 1 + 2 \big( T^{z_1,z_2}_{\mathrm{meet}} - T^{z_1,z_2}_{\mathrm{near}} \big).
  \]
  
  For \eqref{eq:tmeet-tneartight}, consider first the case $t_1=t_2$, and
  then w.l.o.g.\ $t_1=t_2=0$, $x_1=0$. 
  Write 
  \[
  X_{n} = \pi^{z_1}(n), \;\; X'_n = \pi^{z_2}(n), \quad n\in \N_0
  \]
  for the two walks.
  The idea behind \eqref{eq:tmeet-tneartight} is that even if
  $|\pi^{z_1}(T^{z_1,z_2}_{\mathrm{near}}) - \pi^{z_2}(T^{z_1,z_2}_{\mathrm{near}})| = 1$
  and thus $T^{z_1,z_2}_{\mathrm{near}} < T^{z_1,z_2}_{\mathrm{meet}}$,
  the difference should be bounded in probability irrespective of where the two walks
  are at time $T^{z_1,z_2}_{\mathrm{near}}$ in view of Lemma~\ref{main1}.
  A little complication lies in the fact that the pair $(X_n, X'_n)_n$ is not in itself 
  a Markov chain, so we cannot simply stop at the random time $T^{z_1,z_2}_{\mathrm{near}}$
  and then apply the strong Markov property. 

  Instead, we consider the two walks along their joint regeneration times 
  $0=T^{\mathrm{sim}}_0 < T^{\mathrm{sim}}_1 < T^{\mathrm{sim}}_2 < \cdots$, 
  which yields a Markov chain $(\widehat{X}_\ell, \widehat{X}'_\ell)_{\ell\in \N_0}$ 
  (recall the discussion and notation from Section~\ref{sect:preliminaries}). 
  For $a>0$ put
  \[
  \widehat{T}(a) := 
  \inf\big\{ \ell \in \N_0 : \big| \widehat{X}_\ell -\widehat{X}'_\ell \big| \leq a \big\}.
  \]
  
  Fix $M>0$ and let $k \in \N$ be the smallest integer such that $2^k M \geq |x_2-x_1|$.
  Then 
  \begin{align*} 
    & \Pr\Big( | X_n - X'_n | \le 1 \text{ for some } n < T^{\mathrm{sim}}_{\widehat{T}(M)} \Big) \\
    & \leq \sum_{j=1}^k \Pr\Big( | X_n - X'_n | \le 1
      \text{ for some } T^{\mathrm{sim}}_{\widehat{T}(2^j M)} \leq 
      n < T^{\mathrm{sim}}_{\widehat{T}(2^{j-1} M)} \Big) \\
    & \leq \sum_{j=1}^k \Pr\big( T^{\mathrm{sim}}_\ell - T^{\mathrm{sim}}_{\ell-1} > 2^{j-2} M 
      \text{ for some } \widehat{T}(2^j M) \leq \ell < \widehat{T}(2^{j-1} M) \big) \\
    & \leq \sum_{j=1}^k \Big\{ \Pr\big( \widehat{T}(2^{j-1} M)-\widehat{T}(2^j M) > (2^j M)^3 \big) \\[-2.0ex]
    & \hspace{3.5em} + 
      \Pr\big( T^{\mathrm{sim}}_\ell - T^{\mathrm{sim}}_{\ell-1} > 2^{j-2} M
      \text{ for some } \ell \\[-1.0ex]
    & \hspace{6em} \text{ with }  \widehat{T}(2^j M)\leq \ell \leq \widehat{T}(2^j M) + (2^j M)^3 \big) \Big\} \\
    & \leq \sum_{j=1}^k \Big\{ \frac{C M2^j}{\sqrt{(2^j M)^3}} + (2^j M)^3 C \exp\big( -c 2^{j-2} M \big) \Big\} \\
    & \leq \frac{C}{M^{1/2}} \sum_{j=1}^\infty \frac{1}{2^{j/2}} +
      C M^3 \sum_{j=1}^\infty \exp\big( j \log(2) - c 2^{j-1}M \big)
      =: b(M).
  \end{align*}
  Note that $b(M)\to0$ as $M\to\infty$.
  
  We can apply the Markov property of $(\widehat{X}_\ell, \widehat{X}'_\ell)$ 
  at the stopping time $\widehat{T}(M)$ (which corresponds to time 
  $T^{\mathrm{sim}}_{\widehat{T}(M)}$ for the two walks $(X_n,X'_n)$ themselves), noting that 
  \[
  | X_{T^{\mathrm{sim}}_{\widehat{T}(M)}} - X'_{T^{\mathrm{sim}}_{\widehat{T}(M)}} | 
  = | \widehat{X}_{\widehat{T}(M)} - \widehat{X}'_{\widehat{T}(M)} | \leq M
  \]
  and thus, using shift-invariance of the joint distribution and Lemma~\ref{main1}, 
  \begin{align} 
    & \Pr\big( T^{z_1,z_2}_{\mathrm{meet}} - T^{z_1,z_2}_{\mathrm{near}} > M^5 \big) \notag \\
    & \leq \Pr\Big( | X_n - X'_n | \le 1 \text{ for some } n \leq T^{\mathrm{sim}}_{\widehat{T}(M)} \Big) 
      + \sum_{x=1}^M \widetilde{\Pr}_{0,x}\big(T^{(0,x)}_{\mathrm{meet}} > M^5 \big) \notag \\
    & \leq b(M) 
      + C \sum_{x=1}^M \frac{x}{M^{5/2}} 
      \mathop{\longrightarrow}_{M\to\infty} 0 
      \label{eq:Thit-Tclose}
  \end{align}
  and the bound in the last line holds uniformly for all $z_1,z_2 \in \Z\times\Z$.
  \smallskip

  When $t_1 \neq t_2$, say $t_2 > t_1$, we let the first walk begin at
  time $t_1$ and ``run freely'' until time $t_2$, then argue as
  above. Again, there is a slight complication because we would have
  to first look only along regeneration times, then use joint
  regeneration times as soon as the second walk ``comes into the
  picture''. This can be handled similarly as above, we do not spell
  out the details. 
\end{proof}

\subsection{Proof of Theorem~\ref{main2}}

We follow the approach developed in \cite{NewmanRavishankarSun2005} and \cite{Sun:05}.

\subsubsection{Conditions for convergence to the Brownian web}
First we introduce a little more notation which is needed to formulate
the sufficient conditions for convergence to the Brownian web from
\cite{NewmanRavishankarSun2005}.  Define
$\Lambda_{L,T} \coloneq [-L,L]\times[-T,T]\subset \R^2$. For
$x_0,t_0\in \R$ and $u,t>0$ let $R(x_0,t_0,u,t)$ be the rectangle
$[x_0-u,x_0+u]\times[t_0,t_0+t]\subset\R^2$ and define
$A_{t,u}(x_0,t_0)$ to be the 
set of $K\in
\CH$ 
which contain a path that touches both, the rectangle $R(x_0,t_0,u,t)$ and
the left or right boundary of the bigger rectangle
$R(x_0,t_0,20u,2t)$ (note $A_{t,u}(x_0,t_0) \in \CB_\CH$). 
For $a,b,t_0,t\in\R,a<b,t>0$ and $K\in \CH$, we define the
number of distinct points in $\R\times\{t_0+t\}$, which are touched by
some path in $K$ that also touches $[a,b]\times \{t_0\}$ by
\begin{align*}
  \eta(t_0,t;a,b) & \coloneq \eta_K(t_0,t;a,b) \\ 
                  & \coloneq \#\{y\in\R:\exists x\in[a,b]\text{ and a path in $K$ which} \\[-0.5ex]
                  & \hspace{6em} \text{touches both $(x,t_0)$ and $(y,t_0+t)$}\}.
\end{align*}
Similarly, let 
\begin{align*}
  &\widehat\eta(t_0,t;a,b) \coloneq \widehat\eta_K(t_0,t;a,b)\\
  & \coloneq \#\{ x \in (a,b) \text{ there is a path in $K$ which touches both $\R \times \{t_0\}$ and $(x,t_0+t)$}\}.
\end{align*}
be the number of points in $(a,b) \times \{t+t_0\}$ which are touched by some path in $K$ which started 
at time $t_0$ or before.

If $\CX$ is a $(\CH,\CB_{\CH})$-valued random variable, we define
\begin{align}
  \label{Xs-definition}
  \CX^{s^-} \text{ to be the subset of paths in $\CX$ which start before or
  at time $s$.}
\end{align} 
\medskip

Combining Theorem~1.4 and Lemma~6.1 from \cite{NewmanRavishankarSun2005}, 
we see that a family $\{\CX_n\}_n$
of $(\CH,\CB_\CH)$-valued random variables with distribution
$\{\mu_n\}_n$ converges in distribution to the standard Brownian web
$\CW$, if it satisfies the following conditions:
\begin{itemize}
\item[($I_1$)] There exist single path valued random variables $\theta_n^y\in \CX_n,\text{ for }y\in \R^2$, satisfying:\\
  for $\CD$ a deterministic countable dense subset of $\R^2$, for any
  deterministic $z_1,...,z_m \in \CD$,
  $\theta^{z_1}_n,...,\theta^{z_m}_n$ converge jointly in distribution
  as $n \ra \infty$ to coalescing Brownian motions (with unit
  diffusion constant) starting at $z_1,...,z_m$.
\item[($T_1$)] For every $u,L,T\in(0,\infty)$
  \[\widetilde{g}(t,u;L,T)\equiv t^{-1} \limsup_{n\to\infty} \underset{(x_0,t_0)\in\Lambda_{L,T}}{\sup} \mu_n(A_{t,u}(x_0,t_0))\lra 0 
  \;  \text{ as }t\ra 0^+,\]
  which is a sufficient condition for the family $\{\CX_n\}_n$ to be tight.
\item[($B_1'$)] For all $\beta>0$
  \[\limsup_{n\to\infty} \;\underset{t>\beta}{\sup}\;\underset{t_0,a\in \R}{\sup}\mu_n(\eta(t_0,t;a-\e,a+\e)>1)\lra0 \; \text{ as }\e\ra0^+.\]
\item[$(E_1')$] If $\CZ_{t_0}$ is any subsequential limit of $\{\CX_n^{t_0^-}\}_n$ for any $t_0\in\R$, then for all $t,a,b\in \R$, with $t>0$ and $a<b$, 
  \[\E[\widehat\eta_{\CZ_{t_0}}(t_0,t;a,b)]\leq\E[\widehat\eta_\CW(t_0,t;a,b)]=\frac{b-a}{\sqrt{\pi t}}.\]
\end{itemize}  
\begin{remark}   
  1.\ We consider the diffusively rescaled closure of
  $\mathbf{\Gamma}=\{\pi^z:z\in \Z\times\Z\}=\{\pi^z:z\in \CC\}$,
  which is the collection of all linearly interpolated random walk
  paths. Therefore, instead of $\CX_n$, we usually write $\CX_\delta$
  to denote the $(\CH,\CB_{\CH})$-valued random variable
  $S_{v ,\delta}\mathbf{\Gamma}$.  If we want to consider the weak
  limit of $(\CX_\delta)_{\delta>0}$ along a certain subsequence
  $(\delta_n)_n$, where $\delta_n\to 0$ as $n\to \infty$, we denote
  the random variables $S_{v, \delta_n}\mathbf{\Gamma}$ by
  $\CX_{\delta_n}$. The probability measure
  $\Pr\circ(S_{v, \delta_n}\mathbf{\Gamma})^{-1}$ on $(\CH,\CB_{\CH})$
  is denoted by $\mu_{\delta_n}$.  \smallskip

  2.\ We invoke condition $(E_1')$ because because in our model, paths
  $\pi^{z_1}$ and $\pi^{z_2}$ can cross each other without
  coalescing. In this respect, our scenario is different from that in
  \cite{SarkarSun:2013}.
\end{remark}
\subsubsection{Checking condition \texorpdfstring{$(I_1)$}{I1}}
\label{subsect:CheckI1}

Let $\CD$ be a dense countable subset of $\R^2$ and
choose distinct $y_1=(x_1,t_1),...,y_m=(x_m,t_m)\in \CD$.
Define $y_{\delta,i}:=(\left\lfloor x_i v \delta^{-1} \right\rfloor,\left\lfloor t_i\delta^{-2}\right\rfloor).$ 
Let 
\begin{align} 
  \pi_\delta^i:=S^1_{v,\delta}\circ\pi^{y_{\delta,i}}, \quad i=1,\dots,m
\end{align}
be the corresponding diffusively rescaled (and coalescing) random walks. 
In order to show that $(\pi_\delta^1,\dots,\pi_\delta^m)$ converges to a system of 
$m$ coalescing Brownian motions as $\delta \ra 0$, we will follow the strategy from 
\cite{NewmanRavishankarSun2005} and construct a suitable coupling with $m$ 
independent walks on the cluster. One could alternatively attempt to 
use the characterization of coalescing Brownian motions via a martingale 
problem, we discuss this briefly in Remark~\ref{rem:HowittWarren} below.
\medskip

We will need some auxiliary types of paths: 
Let $\widetilde{X}^{(c(y_{\delta,i}))} = (\widetilde{X}^{(c(y_{\delta,i}))}_t)_{t \in \Z+}$, $i=1,\dots,m$ be 
independent conditional on $\CC$ with transition probabilities given by \eqref{eq:defXtransprob}, 
i.e., $\widetilde{X}^{(c(y_{\delta,i}))},\dots,\widetilde{X}^{(c(y_{\delta,m}))}$ are $m$ independent walks on 
the same realization of the cluster, with $\widetilde{X}^{(c(y_{\delta,i}))}$ starting from the nearest possible starting 
point to $y_{\delta,i}$ on $\CC$ (recall $c(z)$ from \eqref{next point left}). Note that we 
can for example construct these walks as in \eqref{eq:defXdynamics} and \eqref{eq:Phidef} by using 
$m$ independent copies of $\widetilde{\omega}$. 
Let $\widetilde{\pi}^{(c(y_{\delta,i}))}$ be the extension of $\widetilde{X}^{(c(y_{\delta,i}))}$ to real times 
by linear interpolation, and denote their rescalings by 
\begin{align} 
  \label{def:pitilde.delta.i}
  \widetilde\pi_\delta^i:=S^1_{v,\delta}\circ \widetilde{\pi}^{(c(y_{\delta,i}))}, \quad i=1,\dots,m.
\end{align}
Note that $\widetilde\pi_\delta^i \mathop{=}^d \pi_\delta^i$ for every $i$ but unlike the $\pi_\delta^i$'s, 
different paths $\widetilde\pi_\delta^i$ and $\widetilde\pi_\delta^j$ with $j\neq i$ can meet 
at times $\in \delta^2 \Z$ and then separate 
again. 
\medskip

Furthermore, we need two different coalescence rules on $\Pi^m$: Under
the first rule $\Gamma_\alpha$, paths are merged when they first
coincide.  Let $((f_1,\sigma_1),\dots,(f_m,\sigma_m)) \in
\Pi^{m}$. Define
\begin{equation*}
  T^{i,j}_\alpha \coloneq \inf\{t>\sigma_i\vee\sigma_j, t \in \R : f_i(t)=f_j(t)\}.
\end{equation*}
Note that $t \in \R$ can be arbitrary, in particular
$t \not\in \delta^2\Z$ is possible.

Start with the (trivial) equivalence relation $i\sim i,\;i\not\sim j$
for all $i\neq j$ on $\{1,...,m\}$.  Define
\begin{equation*}
  \tau_\alpha \coloneq \underset{1\leq i,j\leq m, i\not\sim j}{\min}T_\alpha^{i,j}, \quad \text{ with }\min\emptyset =\infty
\end{equation*}
and 
\[\Gamma_\alpha (f_i(t)) \coloneq
\begin{cases}
  f_i(t),&\text{ if } t<\tau_\alpha\\
  f_{i^\ast}(t),&\text{ if } t\geq\tau_\alpha
\end{cases}\]
where
$i^{\ast}= \min\{j : (j\sim i)\text{ or }(j\not\sim i\text{ and }
T^{i,j}_\alpha=\tau_\alpha)\}$.
Update the equivalence relation at time $\tau_\alpha$ by assigning
$i\sim i^\ast$ (and implicitly also $i \sim i'$ for all
$i' \sim i^*$).  Iterating this procedure, we get the desired
structure of coalescing random walks. We label the successive times
$\tau_\alpha$ by $\tau_\alpha^1,...,\tau_\alpha^k$, where
$k\in\{1,...,m\}$ is the smallest index such that
$\tau_\alpha^k=\infty$ (after $k$ steps, either all paths have been
merged or no further meeting of paths occurs).  We will denote the
resulting $m$-tuple of paths by
$\Gamma_\alpha\big( (f_1,\sigma_1),\dots,(f_m,\sigma_m) \big)$.
\medskip

When we apply $\Gamma_\alpha$ to $(\pi_\delta^1,\dots,\pi_\delta^m)$
it may because of the linear interpolation happen that paths are
merged even though the underlying discrete walks did not meet.  This
is not literally the correct dynamics and is not the case for the
second coalescence rule $\Gamma_{\beta,\delta}$. 
\smallskip

$\Gamma_{\beta,\delta} : \Pi^m \to \Pi^m$ is defined analogously to
$\Gamma_\alpha$ except that we replace in the construction
$T^{i,j}_\alpha$ by
\begin{equation*}
  T^{i,j}_{\beta,\delta} \coloneq \inf\{t \in \delta^2\Z : t \geq \sigma_i\vee\sigma_j \text{ and } f_i(t) = f_j(t) \}.
\end{equation*}
\smallskip
Note that by construction 
\[
\Big( \Gamma_{\alpha}\big( \widetilde\pi_\delta^1, \dots, \widetilde\pi_\delta^m \big), \Gamma_{\beta,\delta}\big( \widetilde\pi_\delta^1, \dots, \widetilde\pi_\delta^m \big) \Big) \mathop{=}^d 
\Big( \Gamma_{\alpha}\big( \pi_\delta^1, \dots, \pi_\delta^m \big), \big( \pi_\delta^1, \dots, \pi_\delta^m \big)\Big),
\]
thus in particular
\begin{align}
  \label{eq:Gamma.beta.tilde.gleich}
  \Gamma_{\beta,\delta}\big( \widetilde\pi_\delta^1, \dots, \widetilde\pi_\delta^m \big) \mathop{=}^d 
  \big( \pi_\delta^1, \dots, \pi_\delta^m \big) = \Gamma_{\beta,\delta}\big( \pi_\delta^1, \dots, \pi_\delta^m \big)
\end{align}
and 
\begin{align}
  \label{eq:Gamma.alpha.gleich}
  \Gamma_{\alpha}\big( \widetilde\pi_\delta^1, \dots, \widetilde\pi_\delta^m \big) \mathop{=}^d 
  \Gamma_{\alpha}\big( \pi_\delta^1, \dots, \pi_\delta^m \big).
\end{align}

With our preparations, to verify condition $(I_1)$, it suffices to show: 
\begin{enumerate} 
\item Show that
  $(\widetilde\pi_\delta^1, \dots, \widetilde\pi_\delta^m)$ converges
  as $\delta \to 0$ in distribution on $\Pi^m$ to $m$ independent
  Brownian motions $(\CB^1,\dots,\CB^m)$.
\item Show that
  $\Gamma_{\alpha}\big( \pi_\delta^1, \dots, \pi_\delta^m \big)$ and
  $\Gamma_{\beta,\delta}\big( \pi_\delta^1, \dots, \pi_\delta^m \big)$
  are close with high probability as $\delta\to 0$.
\item Using Step~1 and \eqref{eq:Gamma.alpha.gleich},
  $\Gamma_{\alpha}\big( \pi_\delta^1, \dots, \pi_\delta^m \big)$
  converges in distribution to $m$ coalescing Brownian motions
  $(\CB^1_\mathrm{coal},\dots,\CB^m_\mathrm{coal}) =
  \Gamma_\alpha(\CB^1,\dots,\CB^m)$
  with the correct starting points.  Combining Step~2 and
  \eqref{eq:Gamma.beta.tilde.gleich} then yields the claim.
\end{enumerate}

\bigskip

\noindent
\underline{Step 1:} 
Let us verify that 
\begin{align} 
  \label{eq:pitildelimit}
  (\widetilde\pi_\delta^1, \dots, \widetilde\pi_\delta^m) \mathop{\lra}^d_{\delta\to 0} (\CB^1,\dots,\CB^m),
\end{align}
where $\CB^1,\dots,\CB^m$ are independent Brownian motions and $\CB^i$
starts from $y_i$.  Obviously, any limit will have the correct
starting points by construction.  To identify the limit, we
essentially apply the quenched CLT from \cite{BCDG13} $m$ times, but
we have to be a little careful because the rescaled starting points
$y_{\delta,i}$ might be inside a ``hole'' of the cluster $\CC$.
\smallskip

Using \cite[Theorem 1.1, Remark 1.5]{BCDG13} we know that for every
$(x,n)\in\Z\times\Z$ the diffusively rescaled random walk
$\pi_\delta^{(x,n)}$ converges weakly under
$\Pr(\cdot\vert B_{(x,n)})$ to a Brownian motion, where $B_{(x,n)}$ is
the event that $(x,n)$ is connected to infinity. Define $G_{(x,n)}$ to
be the event that the quenched functional central limit theorem holds
for a path starting in $(x,n)$.  \cite[Theorem 1.1, Theorem
1.4]{BCDG13} yields $\Pr(G_{(x,n)} \vert B_{(x,n)})=1$, hence
\[
G \coloneq \bigcap_{(x,n)\in\Z^2} \Big( G_{(x,n)}\cup (B_{(x,n)})^c
\Big)
\] 
satisfies $\Pr(G)=1$ since the complement is a countable union of null
sets. Thus up to a $\Pr$-null set either $(x,n)\in \Z\times\Z$ is not
connected to infinity or the quenched functional central limit theorem
holds in $(x,n)$.  Keeping this in mind, in order to prove the claim
of Step~1, is remains to show that
\begin{equation}
  \label{eq:ydeltainahanxi}
  \frac{c(y_{\delta, i})\delta}{ v } 
  \mathop{\longrightarrow}_{\delta \downarrow\infty} x_i \quad \text{in probability},
\end{equation}
where $c((x,n))=\max\{y \leq x:(y,n)\in \CC\}$ as defined in 
\eqref{next point left}.
\smallskip

According to \cite[Section 10, in particular Eq.~(5) on p.~1029]{Durrett:84} we know that there exist $K,C>0$ such that
\begin{align} 
  \label{eq:hole.probab.bd}
  \Pr\l(\vert x-c((x,m))\vert\geq K\log(1/\delta)\r)\leq C \delta^{2} \;\; \text{ for all }(x,m)\in\Z\times\Z\; \text{ and } \delta \in (0,1).
\end{align}
The bound \eqref{eq:hole.probab.bd} on the probability of holes of
order $\approx \log(1/\delta)$ to occur implies
\[\Pr\l(\l|x_i-\frac{c(y_{\delta,i})\delta}{v}\r|>\e\r)=\Pr\l(\l|x_i v \delta^{-1}-c(y_{\delta,i})\r|>\frac{\e v}{\delta}\r)\lra0
\quad\text{as }\delta\downarrow0\]
for every $\e > 0$ and $i=1,\dots,m$, from which 
\eqref{eq:ydeltainahanxi} and thus \eqref{eq:pitildelimit} follow.
\medskip

\noindent
\underline{Step 2:} Let us write
$(\pi^1_{\delta,\alpha},\dots,\pi^m_{\delta,\alpha}) =
\Gamma_{\alpha}\big( \pi_\delta^1, \dots, \pi_\delta^m \big)$
and recall from \eqref{eq:Gamma.beta.tilde.gleich} that
$( \pi_\delta^1, \dots, \pi_\delta^m) = \Gamma_{\beta,\delta}\big(
\pi_\delta^1, \dots, \pi_\delta^m \big)$.
We metrize $\Pi^m$ with the product metric $d^{*m}$ based on
$d(\cdot,\cdot)$ from \eqref{def:dmetric}. 
\smallskip

We claim that for every $\e>0$, 
\begin{align}
  \label{eq:Step 2}
  \Pr\left( d^{*m}\Big( \big( \pi^1_{\delta,\alpha},\dots,\pi^m_{\delta,\alpha} \big), \, 
  \big( \pi_\delta^1, \dots, \pi_\delta^m \big) \Big) \geq \e \right) 
  \mathop{\longrightarrow}_{\delta \to 0} 0 
\end{align}
(comparing with the definition of $d$ in \eqref{def:dmetric}, we leave the dependence on the starting 
times implicit here).

Define a new metric 
\[
\bar{d}((f,\sigma),(g,\sigma')) \coloneq \vert\sigma-\sigma'\vert \vee
\underset{t\in \R}{\sup}\vert f(t\vee \sigma)-g(t\vee \sigma')\vert
\] 
on $\Pi$ and analogously $\bar{d}^{*m}$ on $\Pi^m$.  We have
$d((f_1,t_1),(f_2,t_2))\leq \bar d((f_1,t_1),(f_2,t_2))$ for all
$(f_1,t_1),(f_2,t_2)\in \Pi$, since $\tanh(\cdot)$ is Lipschitz
continuous with Lipschitz constant one. Therefore in order to prove
\eqref{eq:Step 2} its enough to show that
\begin{align}
  \label{eq:Step 2b}
  & \Pr\l(\bar d^{\ast m}\l[\l((\pi^1_{\delta,\alpha}, \lfloor \delta^{-2} t_1\rfloor), \dots, (\pi^m_{\delta,\alpha},\lfloor \delta^{-2} t_m\rfloor)\r), 
    \l((\pi^1_\delta, \lfloor \delta^{-2} t_1\rfloor), \dots, (\pi^m_\delta,\lfloor \delta^{-2} t_m\rfloor)\r)\r]\geq 
    \e \r) \notag \\
  & \hspace{33.3em}  
    \mathop{\longrightarrow}_{\delta \to 0} 0 .
\end{align}

We prove \eqref{eq:Step 2b} by induction over $m$. 
\smallskip

Let $m=2$. Since $\pi^1_{\delta,\alpha} = \pi^1_\delta$ by construction we get that
\begin{align}
  & \bar d^{\ast 2}
    \l[ \l((\pi^1_{\delta,\alpha}, \lfloor \delta^{-2} t_1\rfloor),(\pi^2_{\delta,\alpha},\lfloor \delta^{-2} t_2\rfloor)\r), 
    \l((\pi^1_\delta, \lfloor \delta^{-2} t_1\rfloor),(\pi^2_\delta,\lfloor \delta^{-2} t_2\rfloor)\r)
    \r] \notag \\
  & \hspace{1em} = \bar d\l[ (\pi^2_{\delta,\alpha},\lfloor \delta^{-2} t_2\rfloor), (\pi^2_\delta,\lfloor \delta^{-2} t_2\rfloor)\r] \notag \\
  & \hspace{1em} \leq
    \delta \sup \big\{  | \pi^{y_{\delta,1}}(t) - \pi^{y_{\delta,2}}(t) |
    : T^{y_{\delta,1},y_{\delta,2}}_{\mathrm{near}} \leq t \leq T^{y_{\delta,1},y_{\delta,2}}_{\mathrm{meet}} \big\}
    \label{eq:distm=2}
\end{align}
(recall $T^{y_{\delta,1},y_{\delta,2}}_{\mathrm{near}}$ from
\eqref{def:Tnearz1z2} and
$T^{y_{\delta,1},y_{\delta,2}}_{\mathrm{meet}}$ from
\eqref{def:Tmeetz1z2}).  The bound in \eqref{eq:distm=2} holds because
$\pi^z$'s are linear interpolations of discrete walks with steps from
$\{-1,0,1\}$ and by definition of the merging rule $\Gamma_\alpha$,
$\pi^2_{\delta,\alpha}(t) = \pi^2_\delta(t)$ for
$t<T^{y_{\delta,1},y_{\delta,2}}_{\mathrm{near}}$.  \eqref{eq:distm=2}
and \eqref{eq:suppi1pi2tmeettnear} from
Lemma~\ref{lemma:tmeet-tneartight} imply \eqref{eq:Step 2b} for $m=2$.
\medskip

Now let $m>2$. Here, we can argue essentially analogously to \cite[p.~45]{NewmanRavishankarSun2005}.
There are two possibilities for the event in \eqref{eq:Step 2b} to occur.\\
The first possibility is that a ``wrong'' ($\alpha$-)coalescing event
occurs, which means that for some $k$ and $i<j$ a path $\pi^l_1$,
$l<i$ coalesces or changes its relative order with $\pi^i_1$ after
time $\tau_\alpha^k=T_\alpha^{i,j}$ and before time
$T^{i,j}_\beta$ (where there is then no need for $\pi^l_1$ and $\pi^j_1$ to coalesce ``soon'' since 
their paths did not cross before). 
Let us consider this case. 

Using Step~1 (see also \cite[Theorem 1.3,
Remark 1.5, Remark 3.11]{BCDG13}) and \eqref{eq:Gamma.alpha.gleich} together 
with the fact that $\CL(\CB^1,...,\CB^m)$ has full measure on the set of
continuity points of the mapping $\Gamma_\alpha$, $\Gamma_{\alpha}\big(
\widetilde{\pi}_\delta^1, \dots, \widetilde{\pi}_\delta^m \big)$ converges in distribution on $\Pi^m$ to
$m$ coalescing Brownian motions
\[
(\CB^1_\mathrm{coal},\dots,\CB^m_\mathrm{coal}) = \Gamma_\alpha(\CB^1,\dots,\CB^m)
\]
with the correct starting points.  Write
$\{T^{i,j}_{\alpha,\delta}\}_{1\le i,j\le m}$ for the coalescence
times of
$(\widetilde{\pi}^1_{\delta,\alpha},\dots,\widetilde{\pi}^m_{\delta,\alpha})
= \Gamma_{\alpha}\big( \widetilde{\pi}_\delta^1, \dots,
\widetilde{\pi}_\delta^m \big)$
and $\{T^{i,j}_{\beta,\delta}\}_{1\le i,j\le m}$ for the coalescence
times of
$(\widetilde{\pi}^1_{\delta,\beta},\dots,\widetilde{\pi}^m_{\delta,\beta})
= \Gamma_{\beta}\big( \widetilde{\pi}_\delta^1, \dots,
\widetilde{\pi}_\delta^m \big)$.
We thus obtain for all $i \neq j \le m$
\begin{align} 
  \l(T^{i,j}_{\alpha,\delta}\r)_{1 \le i \neq j \le m} \mathop{\lra}^d_{\delta\to0} \l(\tau^{i,j}\r)_{1 \le i \neq j \le m} 
\end{align}
where $\tau^{i,j}$ is the coalescence time (and indeed also the first
crossing time) of $\CB^i_\mathrm{coal}$ and $\CB^j_\mathrm{coal}$.
Note that almost surely,
$(\CB^1_\mathrm{coal},\dots,\CB^m_\mathrm{coal}) =
\Gamma_\alpha(\CB^1,\dots,\CB^m)$
arises via $m-1$ distinct coalescence events at a.s.\ distinct times.
\smallskip

Furthermore, Lemma~\ref{lemma:tmeet-tneartight} shows that for every $\e>0$, the events
\[
A_\delta(\e) := \bigcap_{1 \le i < j \leq m} \Big\{ T^{y_{\delta,i},y_{\delta,j}}_{\mathrm{meet}} - T^{y_{\delta,i},y_{\delta,j}}_{\mathrm{near}} < \frac{\e}{\delta^2} \Big\}
\]
satisfy $\lim_{\delta \downarrow 0} \Pr(A_\delta(\e)) = 1$.
On the event
\[
A_\delta(\e) \cap \bigg\{ \inf\Big( \big\{ |T^{i,j}_{\alpha,\delta}-T^{i',j'}_{\alpha,\delta}| :
1 \leq i,j,i',j' \leq m, (i,j) \neq (i',j')
\big\} \, \setminus \, \{0\} \Big) > 2\e \bigg\}
\]
we have
\[
\max_{1\leq i<j\leq m} |T_{\alpha,\delta}^{i,j}-T_{\beta,\delta}^{i,j}| \leq \e.
\]
Since $\e>0$ is arbitrary, we have in fact 
\[ \max_{1\leq i<j\leq m} |T_{\alpha,\delta}^{i,j}-T_{\beta,\delta}^{i,j}| \mathop{\longrightarrow}_{\delta\to0} 0 \quad 
\text{in probability.}
\]
But then the probability of a ``wrong'' coalescing event
tends to zero, since all the crossing times of the Brownian motions
are a.s.\ distinct.
\smallskip

The second possibility for the event in \eqref{eq:Step 2b} to occur is
that there is ``too much'' time between the crossing and the
coalescence. ``Too much'' time means there is a positive probability
that at least one pair of the random walks needs more than
$\e/\delta^2$ steps to coalesce after their paths crossed, for some
$\e>0$, which would allow
$\max_{1 \le i \neq j \neq m} \sup_t | \pi^i_{\delta,\alpha}(t) -
\pi^i_\delta(t)|$
to remain ``macroscopic''.  This is ruled out by an argument similar
to the one above, note that again by
Lemma~\ref{lemma:tmeet-tneartight}, the events
\[
A'_\delta(\e) := \bigcap_{1 \le i < j \leq m} \Big\{ 
\sup \big\{  | \pi^{\delta,i}(t) - \pi^{y_{\delta,j}}(t) |
: T^{y_{\delta,i},y_{\delta,j}}_{\mathrm{near}} \leq t \leq T^{y_{\delta,i},y_{\delta,j}}_{\mathrm{meet}} \big\} < \frac{\e}{\delta} \Big\}
\]
satisfy $\lim_{\delta \downarrow 0} \Pr(A'_\delta(\e)) = 1$ for every $\e>0$.
Thus, the proof of \eqref{eq:Step 2b} for $m>2$ is completed.
\medskip

\noindent
\underline{Step 3 (Verification of ($I_1$)):}
Combine \eqref{eq:pitildelimit}, \eqref{eq:Gamma.alpha.gleich} and \eqref{eq:Step 2} 
to see that 
\begin{equation}
  \label{eq:I1conclusion}
  \big( \pi_\delta^1, \dots, \pi_\delta^m \big) \mathop{\longrightarrow}^d_{\delta\to0} 
  (\CB^1_\mathrm{coal},\dots,\CB^m_\mathrm{coal}).
\end{equation}
\smallskip

\begin{remark} 
  \label{rem:HowittWarren}
  An alternative route to \eqref{eq:I1conclusion} would be to use the
  characterization of the law of coalescing Brownian motions (viewed as
  the special case of $\theta$-sticky Brownian motions with $\theta=0$)
  as the unique solution of a martingale problem from
  \cite[Theorem~76]{Howitt:2007}.  See also
  \cite[Theorem~2.1]{HowittWarren:AOP2009} and the discussion in
  \cite[Section~5]{SchertzerSunSwart2015},
  \cite[Appendix~A]{SchertzerSunSwart:MemAMS2014} as well as
  \cite[Appendix~A]{SchertzerSun:arxiv2018}.  In fact, this would
  require to check that for any weak limit point
  $(\tilde{\CB}^1,\dots,\tilde{\CB}^m)$ of
  $(\pi_{\delta_n}^1, \dots, \pi_{\delta_n}^m)$ with $\delta_n\to 0$,
  the following holds: Let $\mathcal{F} = (\mathcal{F}_t)_{t \in \R}$
  with
  $\mathcal{F}_t = \sigma((\tilde{\CB}^i(s \wedge t))_{s \ge t_i},
  \text{for $i$ s.th. } t_i \le t)$ be the joint filtration generated by
  $\tilde{\CB}^1,\dots,\tilde{\CB}^m$.  Then 1.\ each $\tilde{\CB}^i$ is
  an $\mathcal{F}$-Brownian motion starting from space-time point
  $y_i=(x_i,t_i)$, and 2.\ each pair $(\tilde{\CB}^i, \tilde{\CB}^j)$,
  $i \neq j$ is distributed as a pair of coalescing Brownian motions
  (w.r.t.\ the filtration $\mathcal{F}$).

  The fact that each $\tilde{\CB}^i$ individually is a Brownian motion
  follows immediately from the central limit theorem proved in
  \cite{BCDG13} together with Step~1 above and the fact that
  $(\tilde{\CB}^i, \tilde{\CB}^j)$ are coalescing Brownian motions was
  checked in Step~2, case $m=2$ above. However, in our set-up it appears
  quite cumbersome to verify directly that these properties also hold
  with respect to the larger joint filtration $\mathcal{F}$.  The
  natural way to such a result is to consider
  $(\pi^{y_{\delta,1}}, \dots, \pi^{y_{\delta,m}})$ along joint
  regeneration times (cf Remark~\ref{rem:Tsim.tailbound.m>2}). This
  yields a Markov chain on $\Z^m$, then one would need a suitable
  $m$-coordinate analogue of Lemma~\ref{lemma:BCDG.Lemma3.4} and
  therewith implement a martingale plus remainder term decomposition of
  the coordinates of this chain analogous to the construction in
  \cite[Section~3.4]{BCDG13} to conclude.  In our view, spelling out the
  details would be more laborious than the approach discussed above.  On
  the other hand, using \eqref{eq:I1conclusion} we can conclude that
  properties 1.\ and 2.\ discussed above do hold.
\end{remark}

\subsubsection{Checking condition \texorpdfstring{$(T_1)$}{T1}}
\label{subsect:checkingT1}
Let $A^+_{t,u}(x_0,t_0)$ be the set of $K\in\CH$ which contain a path
touching both $R(x_0,t_0,u,t)$ and the right boundary of the bigger
rectangle $R(x_0,t_0,20u,2t)$. Similarly we define
$A^-_{t,u}(x_0,t_0)$ as the event that the path hits the left boundary
of the bigger rectangle. If a variable is diffusively scaled we will
add a ``$\sim$'' to it, where $\tilde t=t\delta^{-2}$ if $t$ is a time
variable and $\tilde x=v x \delta^{-1}$ if $x$ is a space-variable.
In order to verify condition $(T_1)$ it is enough to show that for
every $u\in(0,\infty)$
\begin{align}
  \label{eq:condT1+}
  t^{-1} \limsup_{\delta\to 0} \; \mu_1(A^+_{\tilde t,\tilde
  u}(0,0))\lra 0\; \text{ as }t\ra 0^+, 
\end{align}
where we omitted the sup over $(x_0,t_0)$ from condition $(T_1)$ because of
the spatial invariance of $\mu_1=\Pr\circ(S_{v, 1}\mathbf{\Gamma})^{-1}$. 
\eqref{eq:condT1+} implies $(T_1)$ since $A_{t,u}(x_0,t_0)=A^+_{t,u}(x_0,t_0)\cup
A^-_{t,u}(x_0,t_0)$ and $\mu_1(A^-_{\tilde t,\tilde u}(0,0))$ can be estimated 
completely analogously (in fact, we even have 
$\mu_1(A^-_{\tilde t,\tilde u}(0,0)) = \mu_1(A^+_{\tilde t,\tilde u}(0,0))$ 
by symmetry). 
\medskip

We will show that for every fixed $u>0$, $\limsup_{\delta\to0}
\mu_1(A^+_{\tilde t,\tilde u}(0,0))$ is in $\textbf{o}(t)$. Let $u>0$
and define $x_{1,\delta}:=\left\lfloor 3\tilde u \right\rfloor,
x_{2,\delta}:=\left\lfloor 8\tilde u \right\rfloor,
x_{3,\delta}:=\left\lfloor 13\tilde u \right\rfloor$ and
$x_{4,\delta}:=\left\lfloor 18\tilde u \right\rfloor$ 
with $\tilde u = v u \delta^{-1}$. We are
interested in the paths
$\pi^{x_{i,\delta}}:=\pi^{(x_{i,\delta},0)},\;i=1,2,3,4$.  \smallskip

We denote by $B_i$ the event that $\pi^{x_{i,\delta}}$ stays within
distance $\tilde u$ of $x_{i,\delta}$ up to time $2\tilde t$. For a
fixed $(x,m)\in R(\tilde u,\tilde t):=R(0,0,\tilde u,\tilde t)$ denote
the times when the random walker $\pi^{(x,m)}$ first exceeds
$5\tilde u$, $10\tilde u$, $15\tilde u$ and $20\tilde u$ by
$\tau_1^{(x,m)},\;\tau_2^{(x,m)},\;\tau_3^{(x,m)}$ and
$\tau_4^{(x,m)}$. Furthermore define $\tau_0^{(x,m)}=0$ and
$\tau_5^{(x,m)}=2\tilde t$. Denote by $C_i(x,m)$ the event that
$\pi^{(x,m)}$ does not coalesce with $\pi^{x_{i,\delta}}$ before time
$2\tilde t$.
\begin{figure}
  \begin{center}
    \small 
    \def\svgwidth{12.8cm}
    \begingroup%
    \makeatletter%
    \providecommand\color[2][]{%
      \errmessage{(Inkscape) Color is used for the text in Inkscape, but the package 'color.sty' is not loaded}%
      \renewcommand\color[2][]{}%
    }%
    \providecommand\transparent[1]{%
      \errmessage{(Inkscape) Transparency is used (non-zero) for the text in Inkscape, but the package 'transparent.sty' is not loaded}%
      \renewcommand\transparent[1]{}%
    }%
    \providecommand\rotatebox[2]{#2}%
    \ifx\svgwidth\undefined%
    \setlength{\unitlength}{796.51612549bp}%
    \ifx\svgscale\undefined%
    \relax%
    \else%
    \setlength{\unitlength}{\unitlength * \real{\svgscale}}%
    \fi%
    \else%
    \setlength{\unitlength}{\svgwidth}%
    \fi%
    \global\let\svgwidth\undefined%
    \global\let\svgscale\undefined%
    \makeatother%
%
    \endgroup%
  \end{center}
  \caption{An illustration of (part of) the event
    $\mathbf{\Gamma} \in A^+_{\tilde{t},\tilde{u}}(0,0) \cap
    \bigcap_{i=1}^4B_i$ }
  \label{fig:badcrossing}
\end{figure}
We assume that $\tilde t \in \Z$, if not we replace $\tilde t$ by $\left\lceil \tilde t\right\rceil$. 
We estimate the probability in \eqref{eq:condT1+} 
in the following way (see Figure~\ref{fig:badcrossing}):
\begin{align}
  \mu_1\l(A_{\tilde t,\tilde u}^+(0,0)\r)&\leq \mu_1\l(\bigcup_{i=1}^4 B_i^c\r)\tag{$\ast$}\\
                                         &\hspace{1em} 
                                           +\mu_1\l( \bigcap_{i=1}^4B_i \: \cap \bigcup_{(x,m)\in R(\tilde u, \tilde t)} \bigg( \bigcap_{i=1}^4 C_i(x,m) \cap \{\tau_4^{(x,m)}<2\tilde t\} \bigg) \r) 
                                           \tag{$\ast\ast$}
\end{align}
We estimate the terms $(\ast)$ and $(\ast\ast)$ separately. 
We have 
\begin{align*}
  \limsup_{\delta\to 0} \mu_1\l(\bigcup_{i=1}^4 B_i^c\r)
  &\leq 
    4 \limsup_{\delta\to 0} \mu_1(B_1^c)  \\
  & = 4\Pr\l(\sup_{s\in[0,t]}\vert B_s\vert>u\r) \le 16 e^{-\tfrac{u^2}{2t}}\in \textbf{o}(t)
    \;\; \text{as }t\downarrow 0
\end{align*}
where $B$ is a standard Brownian motion.

The second term $(\ast\ast)$ can be estimated as follows
\begin{align*}
  (\ast\ast)\leq \sum_{\substack{x\in\l[-\tilde u,\tilde u\r]\cap \Z\\m\in\l[0,\tilde t\r]\cap \Z}}
  \mu_1\l(\bigcap_{i=1}^4B_i \cap \bigcap_{i=1}^4C_i(x,m) \cap \{\tau_4^{(x,m)}<2\tilde t\}\r)
\end{align*}

Now we change our point of view on the problem. From now on we come
back to the discrete structure and are only interested in the values
of the random walk path at simultaneous regeneration times
$T^\mathrm{sim}_j$ (of the five random walks), recall the discussion
in Section~\ref{sect:preliminaries} and especially
Remark~\ref{rem:Tsim.tailbound.m>2}.

Denote by $\theta_i$ the first simultaneous regeneration
time when $\pi^{(x,m)}(n)-\pi^{x_{i,\delta}}(n)>0$. Furthermore let
$\widehat B_i$ the event that $\pi^{x_{i,\delta}}$ stays within
distance $\tilde u$ of $x_{i,\delta}$ at simultaneous regeneration
times up to time $2\tilde t$ and denote by $\widehat C_i(x,m)$ the
event that $\pi^{(x,m)}$ does not coincide with $\pi^{x_{i,\delta}}$
at simultaneous regeneration times before time $2\tilde t$.  In
analogy to the previous notation let $\hat\tau_i^{(x,m)}$ be the first
time that a simultaneous regeneration event occurs after the the
random walk path $\pi^{(x,m)}$ exceeds $(5\cdot i)\tilde u$. Only
considering the random walks at simultaneous regeneration times, we
can for every $\e>0$ estimate a single summand of the sum above by
\begin{align*}
  &\mu_1\l(\bigcap_{i=1}^4B_i \cap \bigcap_{i=1}^4C_i(x,m) \cap \{\tau_4^{(x,m)}<2\tilde t\}\r)\\
  &\leq \mu_1\l(\bigcap_{i=1}^4\widehat B_i \cap\bigcap_{i=1}^4 \widehat C_i(x,m) \cap \{\hat \tau_4^{(x,m)}<(2+\e)\tilde t\}\r) \\
  & \hspace{2em} + 
    \Pr\l( \text{no simultaneous regeneration between time $2\tilde t$ and time $(2+\e)\tilde t$}\r) \\
  &\leq \mu_1\l(\bigcap_{i=1}^4\widehat B_i \cap \bigcap_{i=1}^4 \widehat C_i(x,m) \cap \{\hat \tau_4^{(x,m)}<(2+\e)\tilde t\} \cap\{T^\mathrm{sim}_{\theta_4}-T^\mathrm{sim}_{\theta_4-1}<C\log(\tfrac{1}{\delta})\}\r) \\
  & \hspace{2em} + \delta^4+ 2 \tilde{t} C e^{-c \e \tilde{t}} 
\end{align*}
by using exponential tail bounds for increments of
$T^\mathrm{sim}_\ell - T^\mathrm{sim}_{\ell-1}$, see
Remark~\ref{rem:Tsim.tailbound.m>2}. Here we use that $\theta_4$ is a
stopping time for the joint regeneration construction of the five
walks and that we can choose $C$ so large that
\begin{align*} 
  \Pr\l( T^\mathrm{sim}_{\theta_4}-T^\mathrm{sim}_{\theta_4-1} \geq C\log(\tfrac{1}{\delta}) \r) \leq \delta^4.
\end{align*}
Furthermore, the probability that no simultaneous regeneration occurs
between time $2\tilde t$ and time $(2+\e)\tilde t$ is bounded from
above by
\begin{align*} 
  \sum_{\ell=1}^{\lceil 2\tilde{t} \rceil} \Pr\l( T^\mathrm{sim}_\ell - T^\mathrm{sim}_{\ell-1} > \e \tilde t \r) 
  \leq 2 \tilde{t} C e^{-c \e \tilde{t}} = 2t \delta^{-2} e^{-c \e t/\delta^2} = O(t \delta^4) . 
\end{align*}

Now by the regeneration structure, the only information we gained
about the ``future'' after time $T^{sim}_{\theta_4}$ of the cluster is
that each of the five random walks is at a space-time-point that is
connected to infinity. Therefore, without changing the joint
distribution, the future of the cluster can be replaced by some
identical copy in which all the points the random walks sit in are
connected to infinity. By a coupling argument as in the proof of
Lemma~3.4 in \cite{BCDG13}, the cluster to the right of the middle
line of the third red bar (at horizontal coordinate
$x = 15.5 \tilde{u}$, see Figure~\ref{fig:badcrossing}) can be
replaced by an independent copy and the resulting law on
configurations strictly to the right of this third red bar (i.e.,
$x > 16\tilde{u}$) has total variation distance at most
$2\tilde{t} C e^{-c \tilde{u}}$ to the original law.  Thus
\begin{align*} 
  & \mu_1\l(\bigcap_{i=1}^4\widehat B_i \cap \bigcap_{i=1}^4 \widehat C_i(x,m) \cap \{\hat \tau_4^{(x,m)}<(2+\e)\tilde t\} \cap\{T^\mathrm{sim}_{\theta_4}-T^\mathrm{sim}_{\theta_4-1}<C\log(\tfrac{1}{\delta})\}\r) \\
  & \leq \mu_1\l(\bigcap_{i=1}^3\widehat B_i \cap \bigcap_{i=1}^3 \widehat C_i(x,m) \cap \{\hat \tau_3^{(x,m)}<(2+\e)\tilde t\}\r) \\
  & \hspace{7em} \times \sup_{|y| \le C \log(1/\delta)} \Pr^\mathrm{joint}_y\l( \text{$\widehat{D}$ hits $\tilde{u}$ before $0$} \r) 
                                                                                                                                           \: + \: 2\tilde{t} C e^{-c \tilde{u}}. 
\end{align*}
We use here that the difference between $\pi^{(x,m)}$ and $\pi^{x_{4,\delta}}$, running on an independent 
copy of the percolation cluster (and observed along its regeneration times), behaves like 
the Markov chain $\widehat{D}$ 
from Section~\ref{sect:preliminaries} and the proof of Lemma~\ref{main1}.
Remark~\ref{rem:Dhathittingbounds} gives in particular 
\begin{align*} 
  \sup_{|y| \le C \log(1/\delta)} \Pr^\mathrm{joint}_y\l( \text{$\widehat{D}$ hits $\tilde{u}$ before $0$} \r) 
  \le c \frac{C\log(1/\delta)}{\tilde u} 
  = C' \delta \log(1/\delta)
\end{align*}
with $C'=C'(u)<\infty$.
\smallskip

Combining the above and iterating we get 
\[
\mu_1\l(\bigcap_{i=1}^4B_i \cap \bigcap_{i=1}^4C_i(x,m) \cap 
\{ \tau_4^{(x,m)}<2\tilde t \} \r)\leq \l( 2 C' \delta \log(1/\delta) \r)^4 .
\]
Using this, the term $(\ast\ast)$ is bounded above by
\begin{align*}
  &\mu_1\l(\bigcap_{i=1}^4B_i,\exists (x,m)\in R(\tilde u,\tilde t)\text{ s.t.}\bigcap_{i=1}^4 C_i(x,m)\text{ and }\tau_4^{(x,m)}<2\tilde t\r)\\
  &\leq \sum_{x\in[-\tilde u,\tilde u]\cap \Z}\sum_{m\in[0,\tilde t]\cap \Z} \l( 2 C' \delta \log(1/\delta) \r)^4
         \leq \l( 2 C' \delta \log(1/\delta) \r)^4 \cdot 2\tilde u\tilde t \leq C(u)t\delta \l( \log(1/\delta) \r)^4
\end{align*}
This implies that condition \eqref{eq:condT1+} 
is satisfied.

\subsubsection{Checking condition \texorpdfstring{$(B_1')$}{B1}}
We fix $t>\beta>0$ and $t_0,a\in \R$. We want to show that for each
$\e'>0$ there exists $\e>0$ independent of $t,t_0$ and $a$, such that
\[\mu_\delta(\eta(t_0,t;a-\e,a+\e)>1)=\mu_1(\eta(\tilde t_0,\tilde t;\tilde a-\tilde\e,\tilde a+\tilde\e)>1)<\e',\]
for all $\delta>0$ sufficiently small.  First we assume that $\tilde
t_0=n_0\in \Z$. In this case only paths that start from the interval
$[\tilde a-\tilde\e,\tilde a+\tilde\e]\cap \Z$ at time $n_0$ are
counted by $\eta$.  Therefore
\begin{align*}
  &\mu_1(\eta(n_0,\tilde t;\tilde a-\tilde\e,\tilde a+\tilde\e)>1)\\
  &\leq\sum_{\{x,x+1\}\subset[\tilde a-\tilde\e,\tilde a+\tilde\e]\cap \Z}\Pr\l(\pi^{(x,n_0)}(k)\neq\pi^{(x+1,n_0)}(k) \text{ for all }k\in [n_0,n_0+\lfloor\tilde t\rfloor]\r).
\end{align*}
By Lemma~\ref{main1} we get that
\[\Pr\l(\pi^{(x,n_0)}(k)\neq\pi^{(x+1,n_0)}(k) \text{ for all }k\in [n_0,n_0+\lfloor \tilde t\rfloor]\r)\leq \frac{C}{\sqrt{\tilde t}}\]
for some large constant $C$ and
\begin{equation*}
  \mu_1(\eta(n_0,\tilde t;\tilde a-\tilde\e,\tilde a+\tilde\e)>1)\leq \frac{2\tilde \e C}{\sqrt{\tilde t}}\leq \frac{2 v \e C}{\sqrt{t}}\leq \frac{2 v \e C}{\sqrt{\beta}},
\end{equation*}
which is smaller than $\e'$ if $\e<\frac{\e' \sqrt{\beta}}{2 v C}$.\\
\text{ }\\
If $\tilde{t}_0\in(n_0,n_0+1)$ for some $n_0\in\N$, it is enough to show that $\mu_1(\eta(\tilde t_0,\tilde t;\tilde a-2\tilde\e,\tilde a+2\tilde\e)>1)<\e'$, which is true by similar estimates as above.

\subsubsection{Checking condition \texorpdfstring{$(E_1')$}{E1'}}
In order to verify condition $(E_1')$ we need to prove a statement
similar to Lemma\ 6.2 in \cite{NewmanRavishankarSun2005} which is
formulated in Lemma \ref{locfin} below. This can be done by adapting
Lemma\ 2.7 in \cite{NewmanRavishankarSun2005} to our case (see Lemma
\ref{contains0} below). The rest of the proof follows by more general
results, proved in \cite[Section~6]{NewmanRavishankarSun2005}) and
does not need adaptation.
\begin{lemma}
  \label{contains0}
  Recall the collection of paths $\mathbf{\Gamma}$ from
  \eqref{def:Gamma}. For $A\subset \Z$ and $m,n\in \N,\,m>n$, we define
  \[\mathbf{\Gamma}^{A,n}_m:=\{\pi^{(x,n)}(m):x\in A, (x,n)\in \CC\}.\]
  If $n=0$ we simply write $\mathbf{\Gamma}^{A}_m:=\mathbf{\Gamma}^{A,0}_m$. Then 
  \[p_m:=\Pr\l(0\in\mathbf{\Gamma}^{\Z}_m\r)\leq \frac{C}{\sqrt{m}},\]
  for some constant $C$ independent of time.
\end{lemma}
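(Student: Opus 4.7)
My plan is to combine translation invariance with a counting/coalescence estimate in the style of \cite[Lemma~2.7]{NewmanRavishankarSun2005}, with Lemma~\ref{main1} supplying the needed decay.

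First I would use shift-invariance of the joint law of $(\omega,\widetilde\omega)$ to observe $\Pr(y \in \mathbf{\Gamma}^{\Z}_m) = p_m$ for every $y \in \Z$, and sum over $y \in [-L,L]\cap\Z$ to get
\[
(2L+1)\,p_m \;=\; \E\Big[\#\big\{ y \in [-L,L]\cap\Z : y \in \mathbf{\Gamma}^{\Z}_m \big\}\Big].
\]
Since each path $\pi^{(x,0)}$ has $\{-1,0,+1\}$-valued increments, any $y \in [-L,L]$ belonging to $\mathbf{\Gamma}^{\Z}_m$ must equal $\pi^{(x,0)}(m)$ for some $x \in [-L-m,L+m]$ with $(x,0)\in\CC$. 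Hence the cardinality above is bounded by the number of distinct values in the finite sequence $\bigl(\pi^{(x,0)}(m)\bigr)_{x=-L-m}^{L+m}$, and for any integer-valued sequence the number of distinct values is at most
\[
1 + \sum_{x=-L-m}^{L+m-1}\indset{\pi^{(x,0)}(m)\neq\pi^{(x+1,0)}(m)}
\]
(one plus the number of transitions between consecutive ``runs'').

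Taking expectations, I would bound each indicator's probability using Lemma~\ref{main1}: because $\pi^{(x,0)}$ and $\pi^{(x+1,0)}$ coincide from their first joint space-time site onward, one has $\{\pi^{(x,0)}(m)\neq\pi^{(x+1,0)}(m)\}\subset\{T^{((x,0),(x+1,0))}_{\mathrm{meet}}>m\}$, and by the remark following Lemma~\ref{main1} the tail bound $C|z_1-z_2|/\sqrt{m}$ still applies when the starting sites are replaced by their nearest-left cluster points as in \eqref{next point left}. This yields $(2L+1)\,p_m \le 1 + 2(L+m)C/\sqrt{m}$; dividing by $2L+1$ and letting $L\to\infty$ gives $p_m\le C/\sqrt{m}$, as required.

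The argument is essentially bookkeeping, and the only point requiring a little care is to confirm that Lemma~\ref{main1}'s $C/\sqrt{m}$ estimate is available uniformly for all pairs $\{x,x+1\}\subset\Z$, including those where $x$ or $x+1$ lies outside $\CC$. The only nontrivial case is $x\notin\CC$, $x+1\in\CC$ (in the complementary cases the two paths either coincide identically or reduce to a standard cluster pair at distance $1$); this is covered by the extended form of Lemma~\ref{main1} explicitly stated in the sentence immediately following it.
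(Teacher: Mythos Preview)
Your proof is correct and follows essentially the same approach as the paper: translation invariance gives $p_m$ as a density in a large window, the number of distinct endpoints is bounded by one plus the number of non-coalesced neighbour pairs, Lemma~\ref{main1} (in its extended form) controls each such pair, and one lets the window size tend to infinity. The only cosmetic difference is how starting points are localized: the paper uses the tiling identity $\E[|\mathbf{\Gamma}^\Z_m\cap B_M|]\le \E[|\mathbf{\Gamma}^{B_M}_m|]$, whereas you use the bounded-step property to restrict starts to $[-L-m,L+m]$.
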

\begin{proof} 
  Pick $M \in \N$. 
  Let 
  $B_M := \{0,1,\dots,M-1\}$ and in order to simplify notation define
  \begin{align*}
    \mathbf{\Gamma}^{A}_m(x):=
    \begin{cases}
      1, &\text{ if } x \in \mathbf{\Gamma}^A_m\\
      0, &\text{otherwise}
    \end{cases}
  \end{align*}
  for $A\subset\Z$. Using the translation invariance of $\Pr$ we
  obtain
  \[
  e_m(B_M):=\E[|\mathbf{\Gamma}^\Z_m\cap B_M|]
  =\E\l[\sum_{x\in B_M}\mathbf{\Gamma}^\Z_m(x)\r]
  =\sum_{x\in B_M}\E\l[\mathbf{\Gamma}^\Z_m(x)\r]=p_m \cdot M.\] 
  Furthermore, 
  \[
  e_m(B_M)\leq\sum_{k\in\Z}\E[|\mathbf{\Gamma}^{B_M+kM}_m\cap B_M|]
  =\sum_{k\in\Z}\E[|\mathbf{\Gamma}^{B_M}_m\cap (B_M-kM)|]=\E[|\mathbf{\Gamma}^{B_M}_m|].
  \]
  Now the difference $M-|\mathbf{\Gamma}^{B_M}_m|$ is larger than the number
  of nearest neighbour pairs that coalesced before time $m$. Using
  the translation invariance of $\Pr$ again we get that
  \begin{align*}
    \E[M-|\mathbf{\Gamma}^{B_M}_m|]&\geq\sum_{x=0}^{M-2}\E[\ind{\pi^{(x,0)}(t)=\pi^{(x+1,0)}(t)\text{ for some }t \in \{1,2,\dots,m\}}]\\
                                   &=(M-1)\Pr[\pi^{(0,0)}(t)=\pi^{(1,0)}(t)\text{ for some }t \in \{1,2,\dots,m\}]
  \end{align*}
  Lemma~\ref{main1} gives 
  \begin{align*}
    \E[|\mathbf{\Gamma}^{B_M}_m|]&\leq M-(M-1)\Pr[\pi^{(0,0)}(t)=\pi^{(1,0)}(t)\text{ for some }t \in \{1,2,\dots,m\}]\\
                                 &\leq M-(M-1)\l(1-\frac{C}{\sqrt{m}}\r) < 1 + M\frac{C}{\sqrt{m}}
  \end{align*}
  and therefore
  \[p_m<\frac{1}{M}+\frac{C}{\sqrt{m}}.\]
  This yields the claim since $M$ can be chosen arbitrarily large.
\end{proof}
\medskip

Now we are ready to prove our analogon of \cite[Lemma~6.2]{NewmanRavishankarSun2005}. 
Recall the notation $\CX^{t_0^-}$ from \eqref{Xs-definition}. 
\begin{lemma}
  \label{locfin}
  Let $\CZ_{t_0}$ be a subsequential limit of $\CX^{t_0^-}_\delta$,
  where $\CX_\delta:=S_{v,\delta}\mathbf{\Gamma}$ and let $\e>0$. The
  intersection of the paths in $\CZ_{t_0}$ with the line
  $\R \times \{t_0+\e\}$ is almost surely locally finite.
\end{lemma}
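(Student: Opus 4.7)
The plan is to follow the structure of Lemma 6.2 in \cite{NewmanRavishankarSun2005}, reducing the statement to a uniform bound on the expected number of distinct hitting points which is provided (after diffusive scaling) by Lemma~\ref{contains0}. By time–translation invariance of the joint law of $(\omega,\widetilde\omega)$ I may take $t_0=0$ throughout. It then suffices to prove that for every $a<b$ and every $\e>0$ one has
\[
\E\bigl[\widehat\eta_{\CZ_0}(0,\e;a,b)\bigr] \;\leq\; \frac{C(b-a)}{\sqrt{\e}},
\]
where $\widehat\eta$ is as defined before the statement of $(E_1')$. From this the almost-sure finiteness of $\widehat\eta_{\CZ_0}(0,\e;a,b)$ follows for every rational $a<b$, and taking a countable exhaustion of $\R$ gives the claimed local finiteness.

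To bound this expectation, I would first control the discrete analogue for $\CX_\delta^{0^-}$. Let $\widetilde a_\delta=\lfloor a v \delta^{-1}\rfloor$, $\widetilde b_\delta=\lfloor b v \delta^{-1}\rfloor$ and $\widetilde\e_\delta=\lfloor \e\delta^{-2}\rfloor$. The number of distinct points in $(\widetilde a_\delta,\widetilde b_\delta)\times\{\widetilde\e_\delta\}$ reached by paths in $\mathbf{\Gamma}$ started at some time $n\leq 0$ is at most $|\mathbf{\Gamma}^{\Z,0}_{\widetilde\e_\delta}\cap(\widetilde a_\delta,\widetilde b_\delta)|$, because by the coalescing flow structure every path starting before time $0$ passes through some site of $\Z$ at time $0$ and hence is represented at time $\widetilde\e_\delta$ by a path started at time $0$. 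Using spatial translation invariance together with Lemma~\ref{contains0} (which gives $\Pr(y\in\mathbf{\Gamma}^\Z_m)\le C/\sqrt{m}$ for every $y$), this yields
\[
\E\bigl[\widehat\eta_{\CX_\delta^{0^-}}(0,\e;a,b)\bigr] \;\leq\; |\widetilde b_\delta-\widetilde a_\delta|\cdot\frac{C}{\sqrt{\widetilde\e_\delta}} \;\leq\; \frac{C'(b-a)}{\sqrt{\e}},
\]
uniformly in $\delta$.

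To transfer this bound to $\CZ_0$ I need lower semicontinuity of $\widehat\eta(0,\e;a,b)$ under Hausdorff convergence of path sets (together with lower semicontinuity of the ``starting time $\le 0$'' restriction). The point is that if $K_n\to K$ in $(\CH,d_\CH)$ and $K$ contains paths hitting $k$ distinct points in $(a,b)\times\{\e\}$, then for $n$ large each of these $k$ paths is $d$-approximated by a path in $K_n$, and their images at time $\e$ remain distinct by continuity; hence $\widehat\eta_K \le \liminf_n \widehat\eta_{K_n}$. Combined with Fatou's lemma applied along the weakly converging subsequence $\CX^{0^-}_{\delta_n}\Rightarrow \CZ_0$, this gives the desired bound on $\E[\widehat\eta_{\CZ_0}(0,\e;a,b)]$ and therefore the lemma.

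The main obstacle I anticipate is making the lower-semicontinuity step fully rigorous: one must handle the fact that paths in $\CX^{0^-}_{\delta_n}$ are only indexed by a discrete starting time $\in \delta_n^2\Z$, so when taking the closure and passing to the limit one has to verify that each path in $\CZ_0$ that is touched at time $\e$ arises as a genuine limit of paths in $\CX^{0^-}_{\delta_n}$ whose discrete starting times tend to a value $\le 0$. This is exactly the technical content of Section~6 of \cite{NewmanRavishankarSun2005} and I would invoke it directly rather than redoing it; the only new input compared to their setting is the tail bound of Lemma~\ref{contains0}, which I have already used above.
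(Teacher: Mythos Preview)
Your proposal is correct and follows essentially the same route as the paper: bound the expected number of distinct points hit at time $t_0+\e$ for the discrete system via Lemma~\ref{contains0}, then pass the bound to the subsequential limit by a lower-semicontinuity argument. The only cosmetic difference is that the paper packages your lower-semicontinuity step by viewing the intersection with the line $\{t_0+\e\}$ as a random element of the space $\CP$ of compact subsets of $R^2_c$, noting that $\{K\in\CP:|K\cap(a,b)\times\R|\ge k\}$ is open there, and applying Portmanteau directly---which is exactly the clean way to encode the ``$\widehat\eta$ is lower semicontinuous under Hausdorff convergence plus Fatou'' argument you outline.
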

\begin{proof} Let $\CZ_{t_0}$ be the weak limit of a sequence
  $(\CX^{t_0^-}_{\delta_n})_n$ and let $\CZ_{t_0}(t_0+\e)$ be the intersection of 
  all paths in $\CZ_{t_0}$ with the line $t_0+\e$; define $\CX^{t_0^-}_{\delta_n}(t_0+\e)$ 
  analogously. Then $\CZ_{t_0}$ and $\CX^{t_0^-}_{\delta_n}(t_0+\e)$, $n \in \N$ are 
  random variables with values in $(\CP,\rho_\CP)$, where $\CP$ is the space of all 
  compact subsets of $(R^2_c,\rho)$, metrized with the induced Hausdorff metric $\rho_\CP$. 

  Since for all $a,b\in\R,\;a<b$ the set $\{K\in
  (\CP,\rho_\CP):|K\cap(a,b)\times \R|\geq k\}$ is an open set in
  $(\CP,\rho_\CP)$, we get that
  \begin{align*}
    \E[|\CZ_{t_0}(t_0+\e)\cap(a,b)\times\R|]&=\sum_{k=1}^{\infty}\Pr[|\CZ_{t_0}(t_0+\e)\cap(a,b)\times\R|\geq k]\\
                                            &\leq\sum_{k=1}^{\infty}\liminf_{n\lra \infty}\Pr[|\CX^{t_0^-}_{\delta_n}(t_0+\e)\cap(a,b)\times\R|\geq k]\\
                                            &\leq\liminf_{n\lra \infty}\E[|\CX^{t_0^-}_{\delta_n}(t_0+\e)\cap(a,b)\times\R|]
                                                   \leq\frac{C(b-a)}{\sqrt{\e}},
  \end{align*}
  where we used the Portmanteau theorem in the second line.  The last
  inequality holds true by Lemma \ref{contains0}, since (recall the
  scaling notation $\tilde x$, etc.\ introduced before
  \eqref{eq:condT1+})
  \begin{align*}
    &\E[|\CX^{t_0^-}_{\delta}(t_0+\e)\cap(a,b)\times\R|]\\
    &\leq\E\l[\sum_{x\in (\tilde a,\tilde b)\cap \Z}\mathbf{\Gamma}^\Z_{\tilde{t}_0+\tilde\e}(x)\r] \leq \sum_{x\in (\tilde a,\tilde b)\cap \Z} \E[\mathbf{\Gamma}^\Z_{\tilde\e}(x)]\leq\frac{C(\tilde b-\tilde a)}{\sqrt{\tilde \e}}\leq\frac{C(b-a)}{\sqrt{\e}}.
  \end{align*}
  Strictly speaking, since $\tilde\e = \e \delta^{-2}$ need not be an
  integer time, we should estimate
  $|\CX^{t_0^-}_{\delta}(t_0+\e)\cap(a,b)\times\R| \leq \sum_{x\in
    (\tilde a,\tilde b)\cap \Z} \l( \mathbf{\Gamma}^\Z_{\lfloor \tilde\e
    \rfloor}(x) + \mathbf{\Gamma}^\Z_{\lceil \tilde\e \rceil}(x)\r)$
  but this changes only the constant.
\end{proof}
\medskip

Using Lemma~\ref{locfin}, Condition $(E_1')$ can then be proved using the 
strategy from \cite{NewmanRavishankarSun2005}, see Lemma~6.3 there.  

\section{Outlook}
\label{sect:outlook}
Our result can be seen as a convergence result for the space-time
embeddings of ``all ancestral lines'' in a discrete time contact
process.  More precisely, define the contact process as follows:
$(\eta_n^A)_{n \ge m}$ starting at time $m \in \Z$ from the set $A$ as
\begin{align*}
  \eta_m^A (y) & =\indset{A}(y), \; y \in \Z^d, \\
  \intertext{and for $n \ge m$}
  \eta_{n+1}^A(x) & =
                    \begin{cases}
                      1 & \text{if $\omega(x,n+1)=1$ and $\eta_n^A(y)=1$ for some $y \in \Z^d$
                        with $\norm{x-y} \le 1$}, \\
                      0 & \text{otherwise},
                    \end{cases}
\end{align*}
i.e., $\eta_n^A(y)=1$ if and only if there is an open path from
$(x,m)$ to $(y,n)$ for some $x\in A$.
\smallskip

By monotonicity, $\CL(\eta_n^{\Z^d}) \to \nu$ as $n\to\infty$, where
the convergence is weak convergence and $\nu \in \CM(\{0,1\}^{\Z^d})$
is the upper invariant measure, cf \cite{Liggett:1999}.  \smallskip
Note that the percolation cluster is given as the time-reversal of the
stationary process $\eta$. More precisely process
$\xi\coloneqq (\xi_n)_{n \in \Z}$ defined by $\xi_n(x)=\eta_{-n}(x)$,
i.e.\ $\xi_n(x)=1$ iff $-\infty \to^\omega (x,-n)$ (defined as
$\bigcap_{m \ge n} \big\{ \Z^d \times \{-m\} \to^\omega (x,-n)
\big\}$)
describes the percolation cluster in the sense that $\xi_n(x) = 1$ if
and only if $x \in \CC$. See \cite{BCDG13} for more details.
\smallskip Hence, the coalescing walkers on the backbone of the
cluster correspond to space-time embeddings of all ancestral
lines. One may then apply our convergence result to investigate the
behaviour of interfaces in the discrete time contact process
analogously to \cite[Theorem~7.6 and
Remark~7.7]{NewmanRavishankarSun2005}.
For the continuous-time contact process, interfaces and their scaling limits
were analyzed in \cite{MountfordValesin:2016, Valesin:2010} (without explicitly using a Brownian web limit). 
\medskip

As noted in Remark~\ref{rem:aftermain}, Theorem~\ref{main2} is an
``annealed'' limit theorem and it would be interesting to prove an
analogous ``quenced'' result. Since Lemma~\ref{main1} is a key
ingredient in the proof, we this would require a quenched analogue of
\eqref{eq:tailbounds}. In this direction, we conjecture
(based on simulations) that in $d=1$, 
\begin{align*} 
  \lim_{n\to\infty} \sqrt{n} \, P_\omega(T^{(z_1, z_2)}_{\textit{meet}} > n) 
\end{align*}
exists for $\Pr$-a.a.\ $\omega$ (and is a non-trivial function of $\omega$).
\bigskip

\paragraph{\bf Acknowledgements} 
The authors would like to thank Rongfeng Sun for his many helpful comments 
on the manuscript. We also thank an anonymous referee whose suggestions 
helped to improve the presentation. 
M.B.\ and S.S.\ were supported by DFG priority programme SPP~1590  
through grants BI\ 1058/3-1 and BI\ 1058/3-2, 
N.G.\ through grant GA\ 582/7-2. 


\bibliographystyle{plainnat}
\bibliography{bgs-revision-arxiv1812.03733v2.bib}

\begin{appendix}
  \section{Proof that $f$ from \eqref{eq:superharmcandidate} is 
    superharmonic for $\widehat{\Psi}^\mathrm{joint}_\mathrm{diff}$}

  \label{sect:proof.superharmcandidate}

  Consider $x>0$, say.  
  \begin{align}
    \sum_{y \in \Z} \widehat{\Psi}^\mathrm{joint}_\mathrm{diff}(x,y) \big( f(y) - f(x) \big) 
    & \le \sum_{y : |y-x| \ge x/3} \widehat{\Psi}^\mathrm{joint}_\mathrm{diff}(x,y) \big( c|y| + c|x| \big) \notag \\
    \label{eq:fsuperharm1}
    & \hspace{2em} + \sum_{y : |y-x| < x/3} \widehat{\Psi}^\mathrm{joint}_\mathrm{diff}(x,y) \big( f(y) - f(x) \big) 
  \end{align}
  The first term on the right-hand side is bounded by $C_2 e^{-c_2 x}$ for suitable $c_2, C_2 \in (0,\infty)$ 
  by \eqref{eq:Psijointtail}, for the second term 
  we use Taylor expansion to write it with some $\xi_{x,y} \in (x\wedge y, x\vee y)$ as 
  \begin{align}
    \sum_{y : 0 < |y-x| < x/3} \widehat{\Psi}^\mathrm{joint}_\mathrm{diff}(x,y) \Big( (y-x) f'(x) + \frac12(y-x)^2 
    f''(\xi_{x,y}) \Big)
  \end{align}
  Since $|\xi_{x,y} - x|<x/3$ we have $f''(\xi_{x,y}) \le f''(4x/3) = -2 e^{-4 c_1 x/3} \exp\big( 2 e^{-4 c_1 x/3}/c_1\big)$
  and thus 
  \begin{align}
    &\sum_{y : 0 < |y-x| < x/3} \widehat{\Psi}^\mathrm{joint}_\mathrm{diff}(x,y) \frac12(y-x)^2 f''(\xi_{x,y}) 
      \notag \\
    & \hspace{2em} \leq -2 e^{-4 c_1 x/3} \exp\big( 2 e^{-4 c_1 x/3}/c_1\big) 
      \sum_{y : 0 < |y-x| < x/3} \widehat{\Psi}^\mathrm{joint}_\mathrm{diff}(x,y) \frac12(y-x)^2 
      \notag \\
    & \hspace{2em} \leq 
      - \frac{\tilde{\sigma}^2}2 e^{-4 c_1 x/3} \exp\big( 2 e^{-4 c_1 x/3}/c_1\big) 
  \end{align}
  (recall \eqref{eq:Dhatcondmeanandvar}).

  Furthermore, by Lemma~\ref{lemma:BCDG.Lemma3.4} and \eqref{eq:Psijointtail}, 
  \begin{align}
    & \sum_{y : 0 < |y-x| < x/3} \widehat{\Psi}^\mathrm{joint}_\mathrm{diff}(x,y) (y-x)
      \notag \\
    & \hspace{3em} \le C_3 e^{-c_3 x} + \sum_{y : 0 < |y-x| \ge x/3} \widehat{\Psi}^\mathrm{joint}_\mathrm{diff}(x,y) |y-x| 
      \le C_4 e^{-c_4 x}
  \end{align}
  for suitable $c_3, C_3, c_4, C_4 \in (0,\infty)$,

  Combining, we see that the right-hand side of \eqref{eq:fsuperharm1} is negative if we 
  choose $x_0>0$ so large and $c_1>0$ so small that (note $f'(x) = \exp\big( 2 e^{-c_1 x}/c_1\big)$)
  \begin{align} 
    C_2 e^{-c_2 x} + \exp\big( 2 e^{-c_1 x}/c_1\big) C_4 e^{-c_4 x} 
    - \frac{\tilde{\sigma}^2}2 e^{-4 c_1 x/3} \exp\big( 2 e^{-4 c_1 x/3}/c_1\big) < 0 
  \end{align}
  holds for all $x \ge x_0$.
\end{appendix}

\end{document}